\newtheorem{theorem}{Theorem}
\newtheorem{lemma}[theorem]{Lemma}
\newtheorem{observation}[theorem]{Observation}
\newtheorem{corollary}[theorem]{Corollary}
\newtheorem{them}{Theorem}
\newtheorem{prob}[them]{Problem}
\newtheorem{definition}[theorem]{Definition}
\newtheorem{remark}[theorem]{Remark}
\title{ Domination related parameters in the generalized lexicographic product of graphs} 
\author{ Vladimir Samodivkin \\
Department of Mathematics \\
University of Architecture Civil Engineering and Geodesy\\
Hristo Smirnenski 1 Blv., 1046 Sofia, Bulgaria,\\
 \texttt{vl.samodivkin@gmail.com}
 }
\date{}
\begin{document}

\maketitle


\begin{abstract}

In this paper we  begin an exploration of     several domination-related parameters 
(among which  are the total,  restrained, total restrained, paired, outer connected and
 total outer connected  domination numbers)  in the generalized lexicographic product (GLP for short) of graphs.  
We prove that for each GLP of graphs there exist several equality chains  containing  these parameters.
Some known results on standard lexicographic product of two graphs are  generalized or/and extended. 
We also obtain results on well $\mu$-dominated GLP of graphs, where 
$\mu$ stands for any of  the above mentioned domination parameters. 
In particular, we present a characterization of well $\mu$-dominated GLP of graphs 
in the cases when $\mu$ is the domination number or the total domination number. 
\end{abstract}

{\bf Keywords}: total/restrained/acyclic/paired/outer-connected domination; \\ generalized lexicographic product; equality chains.

{\bf AMS subject classification}: 05C69

\newpage

\section{Introduction}

One of the fastest growing areas within graph theory is the study of domination.
Many variants of the basic concepts of domination have appeared in the literature. 
 We refer to  \cite{hhs1} for a survey of the area. 
As many other graph invariants, domination has been studied on different graph products. 
Several papers have been published in the last fifteen years concerning
various types domination in the lexicographic product of two graphs,  including
domination (Nowakowski and Rall \cite{nr}, \u{S}umenjak et al.  \cite{spt} and G\"{o}z\"{u}pek et al. \cite{GHM1}),
total and restrained domination (Zhang et al. \cite{zlm}), 
Roman domination (\u{S}umenjak et al. \cite{spt}), 
rainbow domination (\u{S}umenjak et al.  \cite{srt}), 
super domination (Dettlaff et al. \cite{dlrz}) and double domination (Cabrera Mart\'inez et al.\cite{ccr}).  
However, to the best knowledge of the author, 
there are no studies related to domination in graphs representable as generalized lexicographic products. 
This fact motivates us to begin an exploration of several domination-related parameters 
(among which  are the total,  restrained, total restrained, outer connected 
  and total outer connected  domination numbers)  
	in the generalized lexicographic product of graphs.

We give basic terminologies and notations in the rest of this section. 
All graphs in this paper will be finite, simple, and undirected. 
We use \cite{hhs1} as a reference for terminology and notation which are not explicitly defined here. 

In a graph $G$, for a subset $S \subseteq V (G)$ the {\em subgraph induced} by $S$ is the graph
$\left\langle S \right\rangle$ with vertex set $S$ and edge set $\{xy \in E(G) \mid x, y \in S\}$. 
The {\em complement} $\overline{G}$ of $G$ is the graph whose
vertex set is $V (G)$ and whose edges are the pairs of nonadjacent vertices of $G$.
We write $K_n$ for the {\em complete graph} of order $n$ and $P_n$ for the  {\em path} 
on $n$ vertrices. Let $C_m$ denote the {\em cycle} of length $m$.
 For any vertex $x$ of a graph $G$, $N_G(x)$ denotes the set of all neighbors 
of $x$ in $G$, $N_G[x] = N_G(x) \cup \{x\}$  and the degree of $x$ is $deg_G(x) = |N_G(x)|$. 
The {\em minimum} and {\em maximum} degrees
 of a graph $G$ are denoted by $\delta(G)$ and $\Delta(G)$, respectively.
For a subset $S \subseteq V (G)$, let $N_G[S] = \cup_{v \in S}N_G[v]$. 
The distance between vertices $x$ and $y$ of a graph $G$ is denoted by $dist_G(x,y)$. 
An {\em isomorphism} of graphs $G$ and $H$ is a bijection 
$f \colon V(G)\to V(H)$ such that any two vertices $u$ and $v$ of $G$
 are adjacent in $G$ if and only if $f(u)$ and $f(v)$ are adjacent in $H$. 
If an isomorphism exists between two graphs, then the graphs are called isomorphic
 and denoted as $ G\simeq H$. We use the notation $[k]$  for $\{1,2,..,k\}$.

Let $G$ be a  graph with vertex set
 $V(G) = \{\textbf{1}, \textbf{2},..,\textbf{n}\}$ and   let 
 $\Phi = (F_1,F_2,..,F_n)$ be  an ordered $n$-tuple of  paired disjoint graphs. 
Denote by $G[\Phi]$ the graph with  vertex set $\cup_{i=1}^n V(F_i)$ 
and edge set defined as follows: (a) $F_1, F_2,.., F_n$ are induced subgraphs of $G[\Phi]$,  
 and  (b) if $x \in V(F_i)$, $y \in V(F_j)$, $i,j \in [n]$ and $i \not= j$, then 
$xy \in E(G[\Phi])$ if and only if \textbf{ij} $\in E(G)$. 
A graph  $G[\Phi]$ is called  the  {\em generalized lexicographic product}  of $G$ and $\Phi$.   
If  $F_i \simeq F$ for every $i=1,2,..,n$, then  $G[\Phi]$
 becomes the standard lexicographic product  $G[F]$. 
Each subset $U = \{u_1,u_2,..,u_n\} \subseteq V(G[\Phi])$ such that 
                         $u_i \in V(F_i)$, for every $i \in [n]$, is called a $G$-layer.
From the definition of $G[\Phi]$ it immediately follow:
\begin{itemize}
\item[(A)] (folklore) $G[\Phi] \simeq G$  if and only if  $G[\Phi] = G[K_1]$. 
                         $G[F] \simeq F$  if and only if $G \simeq  K_1$.
                        If $G$ has at least  two vertices, then $G[\Phi]$ is connected if and only if $G$ is connected. 
												If $G$ is edgeless, then $G[\Phi] = \cup_{i=1}^nF_i$.
                        For any $G$-layer $U = \{u_1,u_2,..,u_n\}$               
                        the bijection  $f \colon V(G)\to U$ defined by  $f(\textbf{i}) = u_i \in V(F_i)$ is an 
										    isomorphism between $G$ and $\left\langle  U \right\rangle$. 
												For any $x \in V(F_i)$ and $y \in V(F_j)$, $i \not= j$, is fulfilled  
												$dist_{G[\Phi]}(x,y) = dist_G(\textbf{i},\textbf{j})$. 
\end{itemize} 
 The equality $dist_{G[\Phi]}(x,y) = dist_G(\textbf{i},\textbf{j})$ 
will be used in the sequel without specific references.

 Since for any domination-related parameter $\mu$, which we consider in this work,
and for any two disjoint graphs $G_1$ and $G_2$  is fulfilled 
$\mu(G_1 \cup G_2) = \mu(G_1) + \mu(G_2)$ 
(when it is known that at least one of the left or right sides of this equality exists), 
we restrict our attention only on 
connected generalized lexicographic products.  Therefore, 
 in what follows when a graph $G[\Phi]$ is under consideration 
we assume that $G$ is a connected graph of order $n \geq 2$.  
 Unless otherwise stated, we also  assume that  always $\Phi = (F_1,F_2,..,F_n)$. 

Let $\mathcal{I}$ denote the set of all mutually nonisomorphic graphs. 
	A {\em graph property} is any non-empty subset of $\mathcal{I} $. 
	We say that a {\em graph $G$ has property} $\mathcal{P}$ whenever 
	there exists a graph $H \in \mathcal{P}$ which is isomorphic to $G$. 
For example we list some graph properties: 

\medskip

$\bullet$    $\mathcal{T} = \{H \in \mathcal{I}$ : $\delta(H) \geq 1\}$;

$\bullet$
   $\mathcal{F} = \{H \in \mathcal{I}$ : $H$ is a forest$\}$;
   
$\bullet$      $\mathcal{M} = \{H \in \mathcal{I}$ : $H$ has a perfect matching $\}$;

$\bullet$    $\mathcal{S}_k = \{H \in \mathcal{I}$ :    $\Delta(G) \leq k \}$, $k \geq 0$.

$\bullet$  $\mathcal{C} = \{H \in \mathcal{I}$ :   $H$ is connected$\}$.
  
\medskip  

Any  set $S \subseteq V(G)$ such that  $\left\langle S  \right\rangle$
  possesses the property $\mathcal{A} \subseteq \mathcal{I}$ and 
	$\left\langle V(G)-S  \right\rangle$ possesses the property $\mathcal{B} \subseteq \mathcal{I}$
		is called an $(\mathcal{A}, \mathcal{B})$-{\em set}. 
		A {\em dominating set} for a graph $G$ is a set of vertices $D \subseteq V(G)$ 
		such that every vertex of $G$ is either in $D$ or is adjacent to an element of $D$.
A dominating $(\mathcal{A}, \mathcal{B})$-set  $S$ of a graph $G$  is 
a {\em minimal  dominating $(\mathcal{A}, \mathcal{B})$-set} 
if no set $S^{'} \subsetneq S$ is a dominating $(\mathcal{A}, \mathcal{B})$-set. 
The set of all minimal dominating $(\mathcal{A}, \mathcal{B})$-sets of a graph $G$
 is  denoted by $MD_{(\mathcal{A}, \mathcal{B})} (G)$.
The {\em domination number with respect to the pair} $(\mathcal{A}, \mathcal{B})$, 
denoted by $\gamma_{(\mathcal{A}, \mathcal{B})} (G)$, 
is the smallest cardinality of a dominating $(\mathcal{A}, \mathcal{B})$-set of $G$. 
 The {\em upper  domination number with respect to the pair} $(\mathcal{A}, \mathcal{B})$, 
denoted by $\Gamma_{(\mathcal{A}, \mathcal{B})} (G)$,
 is the maximum cardinality of a minimal dominating $(\mathcal{A}, \mathcal{B})$-set of $G$. 
A $\gamma_{(\mathcal{A}, \mathcal{B})}$(resp., $\Gamma_{(\mathcal{A}, \mathcal{B})}$)-{\em set} 
of a graph $G$ is every set in  $MD_{(\mathcal{A}, \mathcal{B})} (G)$ 
having cardinality $\gamma_{(\mathcal{A}, \mathcal{B})} (G)$ (resp., $\Gamma_{(\mathcal{A}, \mathcal{B})}(G)$).
Note that: 
\begin{itemize}
\item[\rm (a)] $\gamma_{(\mathcal{I}, \mathcal{I})} (G)$ and  $\Gamma_{(\mathcal{I}, \mathcal{I})} (G)$  are known as 
                            the  domination and upper domination numbers $\gamma(G)$ and $\Gamma(G)$ of $G$, respectively,
\item [\rm (b)] $\gamma_{(\mathcal{S}_0, \mathcal{I})} (G)$ and  $\Gamma_{(\mathcal{S}_0, \mathcal{I})} (G)$ are known as 
                            the independent domination number $i(G)$ and the independence number $\beta_0(G)$, 
\item [\rm (c)] 	$\gamma_{(\mathcal{T}, \mathcal{I})} (G)$ and  $\Gamma_{(\mathcal{T}, \mathcal{I})} (G)$ are  known as 
                            the total domination and upper total domination numbers $\gamma_t(G)$ and $\Gamma_t(G)$ (\cite{cdh}), 													
	\item [\rm (d)] $\gamma_{(\mathcal{I}, \mathcal{T})} (G)$ and  $\Gamma_{(\mathcal{I}, \mathcal{T})} (G)$ are  known as 
                            the restrained domination and upper restrained domination numbers $\gamma_r(G)$ and $\Gamma_r(G)$ (\cite{t}),	
	\item [\rm (e)] $\gamma_{(\mathcal{T}, \mathcal{T})} (G)$ and  $\Gamma_{(\mathcal{T}, \mathcal{T})} (G)$ are known as 
                     the total restrained domination and upper total restrained domination numbers $\gamma_{tr}(G)$ and $\Gamma_{tr}(G)$ (\cite{clm}).	
	\item [\rm (f)] $\gamma_{(\mathcal{I}, \mathcal{C})} (G)$ and  $\Gamma_{(\mathcal{I}, \mathcal{C})} (G)$ are known as 
                            the  outer-connected domination and upper  outer-connected domination numbers 
														$\gamma^{oc}(G)$ and $\Gamma^{oc}(G)$ (\cite{cy}),		
	\item [\rm (g)] $\gamma_{(\mathcal{T}, \mathcal{C})} (G)$ and  $\Gamma_{(\mathcal{T}, \mathcal{C})} (G)$ are known as 
                            the  total outer-connected domination and upper  total outer-connected domination numbers 
														$\gamma^{oc}_t(G)$ and $\Gamma^{oc}_t(G)$ (\cite{cyt}), 
	\item [\rm (h)] $\gamma_{(\mathcal{M}, \mathcal{I})} (G)$ and  $\Gamma_{(\mathcal{M}, \mathcal{I})} (G)$ are known as 
                            the paired domination and upper paired domination numbers $\gamma_p(G)$ and $\Gamma_p(G)$ (\cite{hs}).															
\end{itemize}
The following inequalities are folklore: $\gamma(G) \leq \gamma_t(G) \leq \min\{\gamma_p(G), \gamma_{tr}(G), \gamma_t^{oc}(G)\}$, 
$\gamma_r(G) \leq \gamma_{tr}(G)$, $\gamma^{oc}(G) \leq \gamma_t^{oc}(G)$ and  $\gamma(G) \leq \min \{\gamma_r(G), \gamma^{oc}(G)\}$.

Not much work has been done on  finding relationships between
(a)  the value of a given domination parameter in the standard lexicographic product and that of its factors, and 
(b) the values of two given domination parameters in the standard lexicographic product. 
We list  those results that relate to the domination parameters above defined.

\begin{them}\label{known}
Let $G_1$ and $G_2$ be graphs with at least two vertices. 
\begin{itemize}
\item[(i)]  (Zhang et al. \cite{zlm}) If $\gamma(G_2)=1$, then $\gamma(G_1[G_2]) = \gamma(G_1)$. 
\item[(ii)]  (Zhang et al. \cite{zlm}) $\gamma_t(G_1[G_2]) = \gamma_t(G_1)$.
\item[(iii)] (Zhang et al. \cite{zlm})  If $\delta(G_1) \geq 1$, then $\gamma_r(G_1[G_2]) = \gamma_r(G_1)$.
\item[(iv)] (\u{S}umenjak et al.  \cite{spt}) If $G_1$ and $G_2$ are connected and $\gamma(G_2) \geq 2$, then 
                     $\gamma(G_1[G_2]) = \gamma_t(G_1[G_2]) = \gamma_t(G_1)$.    
\end{itemize}
\end{them}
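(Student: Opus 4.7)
My plan is to handle all four parts within a common lift-and-project framework. Write $V_i$ for the copy of $V(G_2)$ indexed by $\textbf{i}\in V(G_1)$. One direction lifts a dominating-type set of $G_1$ by choosing vertices in selected columns $V_i$; the other direction projects a set $S\subseteq V(G_1[G_2])$ to $G_1$ via $\pi(S)=\{\textbf{i}\in V(G_1):S\cap V_i\neq\emptyset\}$. The structural feature that makes both directions go through is that $V_i$ and $V_j$ are completely joined in $G_1[G_2]$ whenever $\textbf{ij}\in E(G_1)$, while $\langle V_i\rangle\simeq G_2$.

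For part (i), the upper bound takes a $\gamma$-set $D$ of $G_1$ together with a universal vertex $v$ of $G_2$ (which exists because $\gamma(G_2)=1$) and lifts to the copies of $v$ in each $V_i$, $\textbf{i}\in D$; universality of $v$ makes the lifted set dominating. The lower bound $\gamma(G_1)\leq\gamma(G_1[G_2])$ is unconditional: for any dominating set $S$, each column $V_j$ must meet $S\cup N_{G_1[G_2]}(S)$, so $\pi(S)$ dominates $G_1$.

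For part (ii), the upper bound is simpler still: given a $\gamma_t$-set $D$ of $G_1$ and any choice $v_i\in V_i$ for $\textbf{i}\in D$, the lifted set is totally dominating because the join structure supplies every column $V_j$ with neighbors $\{v_k:\textbf{k}\in D,\,\textbf{k}\sim\textbf{j}\}$, and totality of $D$ keeps this set nonempty (also for $\textbf{j}\in D$). For the lower bound, let $S$ be total dominating, set $D=\pi(S)$ and $D_1=\{\textbf{i}\in D:\textbf{i}\text{ has no neighbor in }D\}$. Each $\textbf{i}\in D_1$ forces $|S\cap V_i|\geq 2$, because any $S$-vertex there needs an $S$-neighbor inside the column. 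Hence $|S|\geq|D|+|D_1|$, and picking one neighbor $\phi(\textbf{i})\in V(G_1)$ per $\textbf{i}\in D_1$ (possible because $G_1$ is connected of order at least $2$, so $\delta(G_1)\geq 1$) yields a total dominating set $D\cup\phi(D_1)$ of $G_1$ of size at most $|D|+|D_1|\leq|S|$.

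Part (iv) reuses this lower-bound machinery: the hypothesis $\gamma(G_2)\geq 2$ takes the role of ``totality of $S$'' in the above argument, because for $\textbf{i}\in D_1$ the column $V_i$ has no external $S$-dominators, so $S\cap V_i$ must dominate $V_i\simeq G_2$ internally and therefore $|S\cap V_i|\geq\gamma(G_2)\geq 2$. Combined with part (ii) and $\gamma\leq\gamma_t$, the triple equality $\gamma(G_1[G_2])=\gamma_t(G_1[G_2])=\gamma_t(G_1)$ follows. Part (iii) is the most delicate and I expect it to pose the main obstacle: the projection direction adapts from (ii) by routine bookkeeping, but lifting a $\gamma_r$-set $D$ can fail to dominate $G_1[G_2]$ once $\langle D\rangle$ has isolated vertices and $G_2$ lacks a universal vertex. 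The hypothesis $\delta(G_1)\geq 1$ is precisely what permits, in each such column $V_i$, relocating the chosen vertex to a column $V_k$ for a $G_1$-neighbor $\textbf{k}$ of $\textbf{i}$ (necessarily outside $D$); the restrained condition on $D$ then ensures every non-$S$ vertex of $G_1[G_2]$ still retains a non-$S$ neighbor.
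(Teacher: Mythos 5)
The paper never proves this statement: Theorem~\ref{known} is quoted background from the cited sources, and the author's own contribution is to reprove and generalize parts (i), (ii) and (iv) in Section~2 (Theorems~\ref{t=t=}, \ref{gamma=1=}, \ref{two} and Lemma~\ref{lexlemaanew}). Measured against those later proofs, your arguments for (i), (ii) and (iv) are correct and in the same lift/project spirit. The one methodological difference is in the lower bound of (ii): the paper picks a $\gamma_t$-set minimizing the number of fibers it meets twice and removes the double hits by an exchange, whereas you count $|S|\geq |D|+|D_1|$ directly and repair each isolated vertex of $\left\langle D\right\rangle$ with one $G_1$-neighbour. Your version is slightly more economical and, as you note, transfers verbatim to (iv), where $\gamma(G_2)\geq 2$ forces $|S\cap V_i|\geq 2$ on the columns over $D_1$. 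That part of the work is fine.

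Part (iii) is the genuine gap: you have only a sketch, and the relocation idea cannot be completed, because statement (iii) as transcribed here is false. Take $G_1=G_2=K_2$: then $\delta(G_1)=1$ and $G_1[G_2]=K_4$, so $\gamma_r(G_1[G_2])=1$ while $\gamma_r(K_2)=2$. For a non-complete example, $\gamma_r(C_5[K_2])=2$ (one vertex from each of the fibers over $\mathbf{1}$ and $\mathbf{3}$ dominates, and every remaining vertex keeps a neighbour outside the set because each fiber has two vertices), whereas $\gamma_r(C_5)=3$. This is even forced by the paper's own results: Theorem~\ref{two}(i) gives $\gamma_r(G_1[G_2])=\gamma(G_1[G_2])$ whenever $|V(G_2)|\geq 2$, and by Corollary~\ref{g=1=} and Corollary~\ref{ggt} this equals $\gamma(G_1)$ when $\gamma(G_2)=1$ and is at most $\gamma_t(G_1)$ in general --- quantities that are typically strictly smaller than $\gamma_r(G_1)$. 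So the obstacle you flagged in the lifting direction is not a technical difficulty to be engineered around; the two sides are simply unequal, and the correct right-hand side must involve $\gamma(G_1)$ or $\gamma_t(G_1)$ rather than $\gamma_r(G_1)$ (the statement appears to be a mistranscription of the cited result). A blind attempt at this item should have terminated in this counterexample rather than in a plan to relocate vertices.
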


In Section $2$ we obtain results on the parameters $\gamma$, $\gamma_t$, $\gamma_r$, $\gamma_{tr}$, $\gamma_p$, 
$\gamma^{oc}$ and  $\gamma_t^{oc}$   in a generalized lexicographic product. 
Some of them generalize or/and extend  those stated in the above theorem. 

To continue we need the following  definition. 
\begin{definition}\label{abwell}
Let $\mathcal{A}, \mathcal{B} \subseteq \mathcal{I}$. 
A graph $G$ is said to be well $\gamma_{(\mathcal{A}, \mathcal{B})}$-dominated if 
$\gamma_{(\mathcal{A}, \mathcal{B})} (G) = \Gamma_{(\mathcal{A}, \mathcal{B})} (G)$. 
\end{definition}

In a 1970 paper, Plummer \cite{p} introduced the notion of considering graphs in
which all maximal  independent sets have the same size; 
 he called a graph having this property a {\em well-covered graph}. Equivalently, 
a well-covered graph is one in which  the greedy algorithm for constructing 
independent sets yields always maximum independent sets.
Clearly well-covered graphs form the class of all well $i$-dominated graphs. 
Topp and Volkmann \cite{tv2} gave a  characterization   of 
well covered  generalized lexicographic product of graphs. 

Well $\gamma$-dominated graphs were introduced by Finbow  et al. \cite{fhn}. 
Obviously each well $\gamma$-dominated graph is well covered. 
Recall the characterization of the well-dominated nontrivial lexicographic product of two graphs,
which was recently obtained by  G\"{o}z\"{u}pek, Hujdurovi\'c and   Milani\v{c}		in \cite{GHM1}.

\begin{them} \label{wdf} \cite{GHM1} 
A nontrivial lexicographic product, $G[F]$, of a connected graph $G$ and 
a graph $H$ is well-dominated if and only if one of the following conditions holds:
\begin{itemize}
\item[(i)] $G$ is well-dominated and $F$ is complete, or
\item[(ii)] $G$ is complete and $F$ is well-dominated with $\gamma(F) = 2$.	
\end{itemize}
\end{them}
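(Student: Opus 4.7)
The plan is to prove sufficiency and necessity separately, with the key tool being a classification of minimal dominating sets of $G[F]$ according to how they intersect the layers $F_1,\ldots,F_n$.

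For sufficiency, I handle the two cases in parallel. In case (i), with $G$ well-dominated and $F=K_m$, I would first show every minimal dominating set $D$ of $G[K_m]$ meets each layer in at most one vertex: two vertices of $D$ in a common $K_m$-layer $F_i$ would render one of them redundant, since any candidate private neighbor lies either in $F_i$ itself (complete) or in a layer $F_j$ with $\textbf{i}\textbf{j}\in E(G)$ (which is complete to $F_i$ in $G[K_m]$). The projection $D^*=\{\textbf{i}:D\cap F_i\neq\emptyset\}$ then yields a size-preserving bijection between minimal dominating sets of $G[K_m]$ and of $G$, so $\gamma(G[K_m])=\gamma(G)=\Gamma(G)=\Gamma(G[K_m])$. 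In case (ii), with $G=K_n$ and $F$ well-dominated with $\gamma(F)=\Gamma(F)=2$, I classify any minimal dominating set $D$ of $K_n[F]$: either $D\subseteq F_i$ for some $i$, where minimality coincides with minimality in $F_i\simeq F$ (other layers are dominated automatically by completeness of $K_n$), forcing $|D|=2$; or $D$ meets at least two layers, in which case every layer is dominated automatically and a private-neighbor analysis forces $D$ to meet exactly two layers with one vertex each, neither a dominating vertex of its own layer. The assumption $\gamma(F)=2$ guarantees such non-dominating vertices exist, so $|D|=2$ in all cases.

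For necessity, I assume $G[F]$ is well-dominated and split on whether $F$ is complete. If $F=K_m$, the projection bijection from case (i) transfers well-domination back to $G$, giving (i). If $F$ is not complete, the goal is to force $G=K_n$ and then $F$ well-dominated with $\gamma(F)=2$. Once $G=K_n$ is known, I reapply the classification of minimal dominating sets of $K_n[F]$: $F$ has no universal vertex (a universal vertex in one layer would yield a size-$1$ minimal dominating set, coexisting with size-$2$ cross-layer minimal dominating sets built from any non-adjacent pair of non-dominating vertices); $\gamma(F)\le 2$ (otherwise cross-layer pairs of size $2$ would coexist with size-$\ge 3$ minimal dominating sets inside a single layer); and $\Gamma(F)=\gamma(F)=2$ follows because single-layer minimal dominating sets are exactly minimal dominating sets of $F$, all of which must have size $2$.

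The main obstacle is the subproof forcing $G$ to be complete when $F$ is not. Here one assumes for contradiction that some $\textbf{a},\textbf{b}\in V(G)$ are non-adjacent, picks a non-edge $uv$ in $F$, and constructs two minimal dominating sets of $G[F]$ of distinct sizes. A natural small candidate comes from lifting a minimum (or total) dominating set of $G$ to single vertices in the relevant layers, while a large candidate places $u$ in $F_a$ and $v$ in $F_b$ and extends with whatever further vertices are needed to dominate the remaining layers, using the non-edges in both factors to block any reduction. Several subcases are expected depending on whether $F$ has a universal vertex, on $\mathrm{dist}_G(\textbf{a},\textbf{b})$, and on the value of $\gamma(G)$; in each subcase one must verify the two constructed sets end up with genuinely different cardinalities, which is the delicate combinatorial part of the argument.
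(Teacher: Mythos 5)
Your sufficiency argument is essentially correct, and so is the necessity case in which $F$ is complete: the observation that a minimal dominating set of $G[K_m]$ meets each layer at most once (because two vertices in one complete layer have nested closed neighborhoods) and the resulting size-preserving correspondence with minimal dominating sets of $G$ are exactly the right tools, and your classification of minimal dominating sets of $K_n[F]$ is also fine. However, the heart of the theorem --- if $G[F]$ is well-dominated and $F$ is not complete, then $G$ must be complete --- is precisely the step you do not carry out. You describe a plan (pick nonadjacent $\mathbf{a},\mathbf{b}$ in $G$ and a nonedge $uv$ of $F$, build one small and one large minimal dominating set, and check their sizes differ), but you explicitly defer the verification to ``several subcases'' and ``the delicate combinatorial part.'' That deferral is a genuine gap, not a routine omission: it is not clear that the two candidate sets you describe are even both minimal dominating sets in all configurations, let alone of different cardinalities, and nothing in the proposal pins down their sizes.

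For comparison, the paper obtains this statement as the special case $G[\Phi]=G[F]$ of Theorem~\ref{wd2}, and it closes exactly this step with two structural facts. First (Observation~\ref{eqchains} and Lemma~\ref{lexlemaanew}(i)): for any maximal independent set $I$ of $G$, the union of minimal dominating sets of the layers indexed by $I$ is a minimal dominating set of $G[F]$, because each vertex retains a private neighbor inside its own layer; comparing the extremal such sets under the well-dominated hypothesis forces $i(G)=\beta_0(G)$ and $\gamma(F)=\Gamma(F)$, and the layer-intersection bound of Lemma~\ref{lexlemaanew}(i) forces $\gamma(F)\le 2$, hence $\gamma(F)=2$ when $F$ is not complete. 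Second, the same lemma shows that two layers each meeting a $\gamma$-set in two vertices must lie at distance at least $3$ in $G$; applied to the lifted $\gamma$-set this makes every maximal independent set of $G$ an efficient dominating set, which fails for any connected noncomplete $G$ (two vertices at distance $2$ extend to a maximal independent set whose common neighbor is dominated twice). If you want to keep your direct construction, you would need to supply an argument of comparable strength at this point; as written, the proposal proves the easy directions and only gestures at the hard one.
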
		
		
In Section $3$ we present results on  well $\gamma_{(\mathcal{A}, \mathcal{B})}$-dominated graphs; 
in particular we characterize well $\gamma$-dominated  and well $\gamma_t$-dominated 
generalized lexicographic product of graphs.

  We conclude in Section 4 with some open problems.

\section{Seven domination parameters}

Recall that the equality $\gamma_t(G) = \gamma_t(G[F])$ was proven by   X. Zhang et al. \cite{zlm}.
The next theorem shows that the equality remains valid if we remove  $F$ by $\Phi$.

\begin{theorem}\label{t=t=}
  If $I$ is a $\gamma_t$-set of some $G$-layer of $G[\Phi]$, then $I$ is a $\gamma_t$-set of $G[\Phi]$. 
	In particular,  $\gamma_t(G) = \gamma_t(G[\Phi])$.  
\end{theorem}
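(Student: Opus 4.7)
The plan is to establish the two inequalities $\gamma_t(G[\Phi]) \le \gamma_t(G)$ and $\gamma_t(G[\Phi]) \ge \gamma_t(G)$, with the stronger first assertion (that $I$ itself is a $\gamma_t$-set of $G[\Phi]$) falling out of the upper-bound argument once the equality is known. Throughout I will rely on property (A) from the preliminaries, specifically that any $G$-layer $U=\{u_1,\dots,u_n\}$ induces a copy of $G$ in $G[\Phi]$, and that whenever $\mathbf{ij}\in E(G)$ every vertex of $F_i$ is joined in $G[\Phi]$ to every vertex of $F_j$.

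For the upper bound, fix a $G$-layer $U$ and a $\gamma_t$-set $I$ of $\langle U\rangle$; since $\langle U\rangle \simeq G$ we have $|I|=\gamma_t(G)$. I will verify directly that $I$ is already a total dominating set of the whole product. Given $x\in V(F_j)$, total-domination inside $\langle U\rangle$ supplies some $u_i\in I$ with $u_iu_j$ an edge of $\langle U\rangle$, hence $\mathbf{ij}\in E(G)$ and $i\ne j$; property (A) then yields $u_ix\in E(G[\Phi])$, covering also the case $x=u_j$. Since every $u_\ell\in I\subseteq U$ also has a neighbor inside $I$ (by total-domination within $\langle U\rangle$), the set $I$ is total dominating in $G[\Phi]$, and this already gives $\gamma_t(G[\Phi])\le \gamma_t(G)$.

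The reverse inequality is the main obstacle. Let $D$ be any $\gamma_t$-set of $G[\Phi]$ and form the projection $S_0=\{\mathbf{i}\in V(G) : V(F_i)\cap D \ne \emptyset\}$; clearly $|S_0|\le |D|$, and the same case analysis used above shows that $S_0$ is a dominating set of $G$. The subtle point is that $S_0$ need not be total-dominating: a \emph{bad} vertex $\mathbf{j}\in S_0$ with no $G$-neighbor in $S_0$ forces some $y\in V(F_j)\cap D$ to be kept totally dominated by another $D$-vertex lying inside the same layer $F_j$, so that $|V(F_j)\cap D|\ge 2$. Letting $B$ denote the set of bad vertices and choosing for each $\mathbf{j}\in B$ some $G$-neighbor $\mathbf{k_j}$ (which exists because $G$ is connected with $n\ge 2$), I set $T=S_0\cup\{\mathbf{k_j} : \mathbf{j}\in B\}$. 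The counting inequality
\[
|D| \;\ge\; 2|B|+(|S_0|-|B|) \;=\; |S_0|+|B|
\]
gives $|T|\le |D|$, while by construction every vertex of $T$ has a neighbor in $T$ and every vertex of $V(G)\setminus T$ is dominated by $S_0\subseteq T$. Hence $T$ is total dominating in $G$ and $\gamma_t(G)\le \gamma_t(G[\Phi])$.

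Combining the two bounds gives $\gamma_t(G)=\gamma_t(G[\Phi])$, so the set $I$ produced in the first step attains the minimum cardinality $\gamma_t(G[\Phi])$, proving the full statement. The only delicate ingredient is the lower bound: one has to notice that a blind projection of a total dominating set onto $V(G)$ can fail the totality condition precisely on layers where $D$ has ``doubled up'' internally, and to pay for the local repair using exactly those extra $D$-vertices.
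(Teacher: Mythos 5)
Your proof is correct, and while the upper bound (showing that a $\gamma_t$-set $I$ of a $G$-layer totally dominates all of $G[\Phi]$) coincides with the paper's, your lower bound takes a genuinely different route. The paper argues by an extremal exchange: among all $\gamma_t$-sets of $G[\Phi]$ it picks one, $R$, minimizing the number of layers met at least twice, shows that a doubled layer has all its $G$-neighbouring layers disjoint from $R$, and swaps one of the doubled vertices into a neighbouring layer to contradict minimality; the conclusion is that some $\gamma_t$-set of $G[\Phi]$ lies entirely inside a single $G$-layer, and restricting to that layer gives $\gamma_t(G)\le\gamma_t(G[\Phi])$. You instead project an arbitrary $\gamma_t$-set $D$ down to $S_0\subseteq V(G)$, observe that totality can only fail at layers where $D$ doubles up internally, and repair each such failure with one added neighbour, paying for the repairs via the counting inequality $|D|\ge |S_0|+|B|$. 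Your argument is more robust and self-contained: it needs no extremal choice and no verification that an exchange strictly decreases a potential (a point where the paper's argument is actually slightly delicate when a layer meets $R$ in three or more vertices). What it gives up is the structural by-product of the paper's proof, namely that $G[\Phi]$ always admits a $\gamma_t$-set contained in a single $G$-layer, a fact in the spirit of what the paper exploits later (e.g.\ in Lemma \ref{lexlemaanew}). Both correctly yield the full statement, since once $\gamma_t(G)=\gamma_t(G[\Phi])$ is known, the set $I$ from the upper-bound step has the right cardinality.
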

\begin{proof}
  Let $U$ be a $G$-layer of $G[\Phi]$ and $I$ a $\gamma_t$-set of $\left\langle U \right\rangle$.
   By the definition of  a graph $G[\Phi]$ we immediately obtain that 
	$I$ is a total dominating set of $G[\Phi]$. 
   Since $\left\langle  U \right\rangle \simeq G$, $\gamma_t(G) \geq \gamma_t(G[\Phi])$. 
    Now if the equality holds, then  clearly $I$ is a $\gamma_t$-set of $G[\Phi]$. 
	
	For each $\gamma_t$-set $T$ of $G[\Phi]$ denote by $s_T$ 
the number  of all $i$ for which $T$ and $V(F_i)$ have at least two elements in common. 
Choose now a $\gamma_t$-set $R$ of $G[\Phi]$ so that $s_R$ is minimum.
Suppose $s_R\not=0$ and $x,y \in V(F_m) \cap R$ for some $m \in [n]$.
 Then $|R \cap V(F_l)|=0$  for all $F_l$'s such that $\textbf{l}\textbf{m} \in E(G)$. 
Consider now the set $R_1 = (R-\{y\}) \cup \{z\}$, where $z\in V(F_l)$. 
 Obviously $R_1$ is a $\gamma_t$-set $G[\Phi]$ with $s_R > s_{R_1}$,
 which contradicts   the choice of $R$. Thus $s_R=0$  and then there 
is a $G$-layer of  $G[\Phi]$,  say $H$, which contains $R$. Since clearly 
$R$ is a total dominating set of $\left\langle H \right\rangle$ and $\left\langle H \right\rangle \simeq G$, 
we obtain $\gamma_t(G) \leq \gamma_t(G[\Phi])$.
\end{proof}

\begin{theorem}\label{gamma=1=}
$\gamma(G) \leq \gamma(G[\Phi])$. 
The equality holds if and only if there is a $\gamma$-set $I$ of $G$ such that if $\textbf{i}_\textbf{j} \in I$
 is an isolated vertex of $\left\langle I \right\rangle$, then $\gamma(F_j) = 1$.
If $\gamma(G) = \gamma(G[\Phi])$, then  $|D \cap V(F_i)| \leq 1$,  $i=1,2,..,n$, 
 for each $\gamma$-set $D$ of $G[\Phi]$. 
\end{theorem}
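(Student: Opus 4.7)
The plan is to establish the inequality by means of a natural projection $\pi \colon 2^{V(G[\Phi])} \to 2^{V(G)}$ defined by $\pi(D) = \{\textbf{i} \in V(G) : D \cap V(F_i) \neq \emptyset\}$. Clearly $|\pi(D)| \leq |D|$, with equality precisely when each layer $V(F_i)$ contains at most one element of $D$. The first step is to verify that whenever $D$ dominates $G[\Phi]$, $\pi(D)$ dominates $G$: if $\textbf{j} \notin \pi(D)$ then $V(F_j) \cap D = \emptyset$, so any $x \in V(F_j)$ must be dominated by a vertex of $D$ sitting in some $V(F_k)$ with $\textbf{jk} \in E(G)$, placing $\textbf{k} \in \pi(D)$. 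This yields $\gamma(G) \leq |\pi(D)| \leq |D|$ for every dominating set $D$ of $G[\Phi]$, and in particular $\gamma(G) \leq \gamma(G[\Phi])$.

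For the sufficiency of the characterization, assume $I$ is a $\gamma$-set of $G$ in which every vertex $\textbf{j}$ isolated in $\left\langle I \right\rangle$ satisfies $\gamma(F_j) = 1$. I would build a dominating set $D$ of $G[\Phi]$ of size $|I|$ by selecting, for each $\textbf{j} \in I$, one vertex of $V(F_j)$: pick a dominating vertex of $F_j$ if $\textbf{j}$ is isolated in $\left\langle I \right\rangle$, and an arbitrary vertex otherwise. Checking that $D$ dominates is routine, since vertices in a layer $V(F_k)$ with $\textbf{k} \notin I$ are dominated across a $G$-edge by the vertex chosen in some neighbor layer $V(F_l)$ with $\textbf{l} \in I$, while vertices in $V(F_j)$ with $\textbf{j} \in I$ are dominated either by such a cross-layer vertex (if $\textbf{j}$ has a neighbor in $I$) or by the internally chosen dominator of $F_j$ (if $\textbf{j}$ is isolated in $\left\langle I \right\rangle$).

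For the necessity, suppose $\gamma(G) = \gamma(G[\Phi])$ and let $D$ be a $\gamma$-set of $G[\Phi]$. Then $\gamma(G) \leq |\pi(D)| \leq |D| = \gamma(G)$, which forces $|\pi(D)| = |D|$ and makes $\pi(D)$ a $\gamma$-set of $G$. The equality $|\pi(D)| = |D|$ is exactly the statement $|D \cap V(F_i)| \leq 1$ for every $i$, so the third assertion of the theorem falls out as a byproduct. Setting $I = \pi(D)$, if $\textbf{j} \in I$ is isolated in $\left\langle I \right\rangle$ then no neighbor layer $V(F_l)$ contains a vertex of $D$, hence the unique vertex of $D$ lying in $V(F_j)$ must single-handedly dominate all of $V(F_j)$ within $F_j$, giving $\gamma(F_j) = 1$.

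The only step that requires genuine care is the verification that $\pi(D)$ dominates $G$; once that is in place, a simple counting bound $|\pi(D)| \leq |D|$ drives both directions of the characterization and the per-layer intersection bound simultaneously. I do not anticipate further obstacles, and the third conclusion emerges for free from the equality case of the counting argument.
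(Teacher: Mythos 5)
Your proposal is correct and follows essentially the same route as the paper: the paper selects one representative of $D$ from each layer it meets and views them inside a $G$-layer isomorphic to $G$, which is exactly your projection $\pi(D)$ in different clothing, and both arguments then run the same counting/equality analysis for the characterization and the per-layer intersection bound. No gaps.
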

\begin{proof}
Let $D$ be a $\gamma$-set of $G[\Phi]$  and let $F_{i_1},F_{i_2},..,F_{i_k}$ be all $F_j$'s 
each of which has  a common vertex with $D$. 
Choose a $G$-layer $U = \{u_1,u_2,..,u_n\}$ so that  $u_{i_s} \in V(F_{i_s})\cap D$ for $s=1,2,..,k$. 
Clearly $D_1 = \{u_{i_1}, u_{i_2},.., u_{i_k}\}$ is a dominating set  of $U$. 
Since  $\left\langle U \right\rangle \simeq G$, we have 
$\gamma(G) = \gamma(U) \leq |D_1| \leq |D| = \gamma(G[\Phi])$. 

Assume now that $\gamma(G) =  \gamma(G[\Phi]) =k$. 
Then $D_1$ is a $\gamma$-set of $U$ and $D_1=D$. 
This immediately implies $|D \cap V(F_i)| \leq 1$ for all $i \in [n]$. 
Since  $\left\langle U \right\rangle \simeq G$,  $I = \{\textbf{i}_\textbf{1}, \textbf{i}_\textbf{2},.., \textbf{i}_\textbf{k}\}$ is a  $\gamma$-set of $G$. 
If $\textbf{i}_\textbf{r}$ is an isolated vertex of $\left\langle I \right\rangle$, then  
$u_{i_r}$ is  an isolated vertex of $\left\langle D_1 \right\rangle$.
 Since $D=D_1$, $u_{i_r}$ is  an isolated vertex of $\left\langle D \right\rangle$ 
and therefore $u_{i_r}$ must dominate all vertices in $F_{i_r}$.

Let there be a $\gamma$-set   $I = \{\textbf{i}_\textbf{1}, \textbf{i}_\textbf{2},.., \textbf{i}_\textbf{k}\}$ of $G$ 
such that  for each isolated vertex $\textbf{i}_\textbf{s}$ in $\left\langle I \right\rangle$, $\gamma(F_{i_s}) = 1$. 
  But then   the set $R = \{x_{i_1}, x_{i_2},.., x_{i_k}\}$, where $x_{i_s} \in V(F_{i_s})$
	has maximum degree in $F_{i_s}$, $s=1,2,..,k$, is a dominating set of $G[\Phi]$. 
	Hence  $\gamma(G) \geq  \gamma(G[\Phi])$ and 	the required follows.
	\end{proof}

\begin{corollary}\label{g=1=}
If $\gamma(F_1) = \gamma(F_2) = ... = \gamma(F_n) = 1$,  
then  $\gamma(G) = \gamma(G[\Phi])$.  
\end{corollary}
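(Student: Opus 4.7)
The plan is to deduce this immediately from Theorem \ref{gamma=1=}. That theorem already supplies the inequality $\gamma(G) \leq \gamma(G[\Phi])$ unconditionally, so the only thing to verify is the reverse inequality, which by the equivalence in Theorem \ref{gamma=1=} reduces to exhibiting a $\gamma$-set $I$ of $G$ with the property that every vertex $\textbf{i}_\textbf{j} \in I$ which is isolated in $\langle I\rangle$ satisfies $\gamma(F_j)=1$.

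The key observation is that under the hypothesis $\gamma(F_1)=\gamma(F_2)=\cdots=\gamma(F_n)=1$, this side condition is vacuously satisfied for every choice of $I$: whichever $\gamma$-set of $G$ we pick, and whichever of its members happen to be isolated in the induced subgraph, the corresponding $F_j$ already has domination number $1$. Hence I would simply select an arbitrary $\gamma$-set $I$ of $G$ and invoke the equality clause of Theorem \ref{gamma=1=} to conclude $\gamma(G)=\gamma(G[\Phi])$.

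There is essentially no obstacle here; the corollary is a one-line consequence, and the only thing worth spelling out is that the isolated-vertex hypothesis in Theorem \ref{gamma=1=} becomes automatic when every $F_j$ has a dominating vertex. For completeness one could also make the construction of a dominating set of $G[\Phi]$ of size $\gamma(G)$ explicit: take any $\gamma$-set $\{\textbf{i}_\textbf{1},\dots,\textbf{i}_\textbf{k}\}$ of $G$ and, in each $F_{i_s}$, pick a vertex $x_{i_s}$ of maximum degree (which by the hypothesis $\gamma(F_{i_s})=1$ dominates all of $V(F_{i_s})$); the resulting set $\{x_{i_1},\dots,x_{i_k}\}$ dominates $G[\Phi]$, matching the proof of the ``if'' direction of Theorem \ref{gamma=1=}.
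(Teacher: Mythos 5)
Your proof is correct and matches the paper's intent exactly: the paper gives no separate proof of this corollary, treating it as an immediate consequence of Theorem \ref{gamma=1=}, and your observation that the isolated-vertex condition becomes vacuous when every $\gamma(F_j)=1$ is precisely the intended one-line deduction.
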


By Theorem \ref{t=t=}, Theorem \ref{gamma=1=} and the well known inequalities $\gamma(G) \leq \gamma_t(G) \leq 2\gamma(G)$,  
we obtain the next inequality chain.
\begin{corollary}\label{ggt}
$\gamma(G) \leq \gamma(G[\Phi]) \leq \gamma_t(G[\Phi]) = \gamma_t(G) \leq 2\gamma(G) \leq 2\gamma(G[\Phi])$.
\end{corollary}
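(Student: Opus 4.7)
The plan is to establish the corollary by concatenating inequalities that are already proven or that are classical. The chain has five links, and each one is either a direct quotation of a prior theorem, a well-known graph-theoretic bound, or an immediate consequence of one of these.

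First I would observe that the leftmost inequality $\gamma(G) \leq \gamma(G[\Phi])$ is the first assertion of Theorem \ref{gamma=1=}, so nothing needs to be redone there. Next, the inequality $\gamma(G[\Phi]) \leq \gamma_t(G[\Phi])$ is an instance of the folklore bound $\gamma(H) \leq \gamma_t(H)$, valid for any graph $H$ with no isolated vertices; since by our standing assumption $G$ is connected of order at least $2$, the product $G[\Phi]$ has no isolated vertices (any vertex of $F_i$ is joined in $G[\Phi]$ to every vertex of $F_j$ whenever $\mathbf{ij}\in E(G)$), so every vertex has a neighbor and the bound applies. The middle equality $\gamma_t(G[\Phi]) = \gamma_t(G)$ is exactly the conclusion of Theorem \ref{t=t=}.

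For the remaining two inequalities, I would invoke the classical fact $\gamma_t(G) \leq 2\gamma(G)$ (which holds for any graph without isolated vertices and is obtained, for instance, by taking a $\gamma$-set $D$ and adjoining for each isolated vertex of $\langle D\rangle$ one of its neighbors in $G$). Finally, $2\gamma(G) \leq 2\gamma(G[\Phi])$ is just twice the first inequality.

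There is essentially no obstacle here: the corollary is a packaging step whose only subtlety is ensuring that the ambient graph $G[\Phi]$ has no isolated vertices so that the $\gamma \leq \gamma_t \leq 2\gamma$ bounds are legitimately available. Once that is noted, the five inequalities align in the stated order and the chain is complete.
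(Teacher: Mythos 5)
Your proof is correct and follows exactly the paper's route: Theorem \ref{gamma=1=} for the leftmost inequality, Theorem \ref{t=t=} for the middle equality, and the folklore chain $\gamma \leq \gamma_t \leq 2\gamma$ for the rest. The only addition is your explicit check that $G[\Phi]$ has no isolated vertices, which the paper leaves implicit but which is a reasonable point to verify.
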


An immediate  consequence of this corollary is the following.

\begin{corollary}\label{g=gt=}
  If $\gamma(G) = \gamma_t(G)$, then  $\gamma(G[\Phi]) = \gamma_t(G[\Phi])$. 
	If $\gamma_t(G[\Phi]) =  2\gamma(G[\Phi])$, then $\gamma_t(G) =2\gamma(G)$.
\end{corollary}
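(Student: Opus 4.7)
The plan is to derive both implications directly from the five-term inequality chain in Corollary \ref{ggt}, namely
\[
\gamma(G) \le \gamma(G[\Phi]) \le \gamma_t(G[\Phi]) = \gamma_t(G) \le 2\gamma(G) \le 2\gamma(G[\Phi]).
\]
This chain does all the heavy lifting: each implication amounts to noting that when one of the outer quantities coincides with an inner one, every inequality wedged between them must collapse to equality.

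For the first implication, I would assume $\gamma(G) = \gamma_t(G)$ and read off the segment $\gamma(G) \le \gamma(G[\Phi]) \le \gamma_t(G[\Phi]) = \gamma_t(G)$. Since the two ends are now equal by hypothesis, the middle inequality must be an equality, giving $\gamma(G[\Phi]) = \gamma_t(G[\Phi])$. For the second implication, I would assume $\gamma_t(G[\Phi]) = 2\gamma(G[\Phi])$ and read off the segment $\gamma_t(G[\Phi]) = \gamma_t(G) \le 2\gamma(G) \le 2\gamma(G[\Phi])$. Again the two ends coincide, so $\gamma_t(G) = 2\gamma(G)$.

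There is essentially no obstacle beyond invoking the preceding corollary correctly; the argument is a two-line squeeze. If I wanted a self-contained proof (not using Corollary \ref{ggt} explicitly), I would instead invoke Theorem \ref{t=t=} for the identity $\gamma_t(G[\Phi]) = \gamma_t(G)$, Theorem \ref{gamma=1=} for $\gamma(G) \le \gamma(G[\Phi])$, and the folklore bound $\gamma_t(H) \le 2\gamma(H)$ applied to both $G$ and $G[\Phi]$, stitching the same inequalities together in the same way.
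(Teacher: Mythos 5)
Your proof is correct and matches the paper exactly: the paper presents this corollary as an immediate consequence of the inequality chain in Corollary \ref{ggt}, and your two squeeze arguments are precisely how that consequence is obtained. Nothing further is needed.
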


 Now we concentrate on the case when all $F_i$'s have at least two vertices. 
We need the following key lemma for our purpose in this work.

\begin{lemma}\label{lexlemaanew}
Let  $\mu \in \{\gamma,\gamma_t\}$,          
$D$ be a $\mu$-set of $G[\Phi]$ and $|V(F_i)| \geq 2$ for all $i \in [n]$. 
Then the following assertions hold. 
\begin{itemize}
\item[(i)] $|D \cap V(F_i)| \leq 2$ for all  $i=1,2,..,n$.  
                    If $|D \cap V(F_s)|=2$ for some $s \in [n]$, then no vertex of $F_s$ 
										is adjacent to a vertex of $D-V(F_s)$ and $D \cap V(F_s)$ is a $\mu$-set  of $F_s$.  
										If $|D \cap V(F_i)| = |D \cap V(F_j)| = 2$, $i \not= j$,
										then 	the distance between any vertex of $F_i$ to any vertex of $F_j$ is at least three.  
\item[(ii)]	 Let  $R = \{i \mid  2=|D \cap V(F_i)|\} \not= \emptyset$. 
                    For all $i \in R$ let $D \cap V(F_i) = \{z_{i1}, z_{i2}\}$ 
										and $x_i  \in N(z_{i2}) -V(F_i)$. 
									Then the set $D^* =( D -\cup_{i \in R}\{z_{i2}\}) \cup \cup_{i \in R}\{x_i\}$ 
							     is a $\mu$-set of $G[\Phi]$ and $|D^* \cap V(F_r)| \leq 1$ for all $r \in [n]$.   
\item[(iii)] $G[\Phi]$ has a $\mu$-set $U$ such that  (a) $|U \cap V(F_i)| \leq 1$ for all  $i \in [n]$,  
                   (b) if $\mu = \gamma$, then $U$ is both a $\gamma_r$-set  and a $\gamma^{oc}$-set of $G[\Phi]$, and 										      
									 (c) if $\mu = \gamma_t$, then $U$ is both a  $\gamma_{tr}$-set and a $\gamma_t^{oc}$-set of $G[\Phi]$. 
\item[(iv)] Let $\mu = \gamma$ and $\gamma(F_i) \geq 2$ for all $i \in [n]$. 
                     Then a $\gamma$-set $U$ (see (iii))  is a $\nu$-set for all $\nu= \gamma_r, \gamma_t, \gamma_{tr}, \gamma^{oc}, \gamma_t^{oc}$. 
\end{itemize}
\end{lemma}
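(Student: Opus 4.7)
The overall strategy is to perform ``swap'' modifications on the $\mu$-set $D$, exploiting the universal adjacency in $G[\Phi]$: a single vertex of $V(F_s)$ is adjacent to every vertex of $V(F_k)$ whenever $\textbf{s}\textbf{k}\in E(G)$, so one vertex of $V(F_s)$ already dominates every such $V(F_k)$ and, symmetrically, one vertex of $V(F_k)\cap D$ totally dominates $V(F_s)$. Since $G$ is connected with $n\geq 2$, a $G$-neighbor $\textbf{k}$ of any chosen index $\textbf{s}$ is always available for such swaps.

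For (i), set $A=D\cap V(F_s)$. The bound $|A|\leq 2$ follows from the observation that the set $(D-A)\cup\{a_1,a_2\}$ with $a_1\in V(F_s)$ and $a_2\in V(F_k)$ is still $\mu$-dominating (for $\mu=\gamma_t$, the edge $a_1a_2$ supplies the total-domination witness); minimality of $|D|$ then prevents $|A|\geq 3$. When $|A|=2$, suppose some $y\in D-V(F_s)$ is adjacent to $V(F_s)$; deleting one element $a\in A$ and using $y$ together with the surviving element to cover what is lost shows $D-\{a\}$ is still $\mu$-dominating, contradicting minimality, whence no such $y$ exists and $A$ must $\mu$-dominate $F_s$. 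Since $|A|=2$, this forces $A$ to be a $\mu$-set of $F_s$. For the distance clause, if $|D\cap V(F_i)|=|D\cap V(F_j)|=2$ and $\textbf{k}$ were a common $G$-neighbor of $\textbf{i},\textbf{j}$, then $V(F_k)\cap D=\emptyset$ by the previous step, and the swap $(D-\{z_{i2},z_{j2}\})\cup\{y\}$ with $y\in V(F_k)$ strictly decreases $|D|$ while preserving $\mu$-domination. The most delicate bookkeeping occurs under $\mu=\gamma_t$, where one must additionally verify that every totality witness in $D$ is preserved or replaced throughout; this uses the fact, established just above, that the two vertices of $A$ must be adjacent in $F_s$ when $\mu=\gamma_t$.

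Part (ii) is a direct construction using the structural information from (i). The replacement set $D^*$ still $\mu$-dominates because each $x_i\in V(F_j)$ with $\textbf{j}\textbf{i}\in E(G)$ dominates all of $V(F_i)$, the surviving $z_{i1}$ takes over the external duties of $z_{i2}$, and the edge $z_{i1}x_i$ provides totality when $\mu=\gamma_t$; cardinality is preserved, so $D^*$ is again a $\mu$-set. The distance-at-least-three condition of (i) then guarantees that no two of the inserted $x_i,x_{i'}$ land in a common fiber and that no $x_i$ lies in a fiber already meeting $D$, giving $|D^*\cap V(F_r)|\leq 1$ for every $r$.

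For (iii), take $U:=D^*$. The hypotheses $|V(F_i)|\geq 2$ and $|U\cap V(F_i)|\leq 1$ place a vertex outside $U$ in every fiber, and since $G$ is connected any two such vertices can be joined by a path that steps fiber-by-fiber along a $G$-path, exploiting the universal adjacency between fibers whose indices are $G$-adjacent; this simultaneously rules out isolated vertices in $\langle V(G[\Phi])-U\rangle$ and proves its connectedness. Combined with the standard inequalities $\gamma\leq\gamma_r\leq\gamma_{tr}$, $\gamma\leq\gamma^{oc}\leq\gamma_t^{oc}$ (and the analogues starting from $\gamma_t$), this forces $|U|$ to realize each of the parameters listed in (b) or (c). Finally, for (iv), when $\gamma(F_i)\geq 2$ for every $i$ the set $U$ from (iii) is automatically a total dominating set: any $u\in U\cap V(F_i)$ without a neighbor in $U$ would be forced to dominate $F_i$ alone, contradicting $\gamma(F_i)\geq 2$. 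Hence $\gamma(G[\Phi])=\gamma_t(G[\Phi])=|U|$ and the entire chain of inequalities collapses, making $U$ simultaneously a $\gamma_r$-, $\gamma_t$-, $\gamma_{tr}$-, $\gamma^{oc}$-, and $\gamma_t^{oc}$-set. The principal obstacle I anticipate is the careful verification of swaps under $\mu=\gamma_t$ in part (i), since totality is fragile with respect to single-vertex deletions; once those cases are settled, the subsequent parts follow cleanly from the structural information produced.
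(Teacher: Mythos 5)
Your proposal is correct and follows essentially the same strategy as the paper: replacement (``swap'') arguments exploiting the universal adjacency between fibers with $G$-adjacent indices to establish the structural constraints in (i), the explicit construction of $D^*$ in (ii), connectivity of the complement via the $G$-layers in (iii), and the observation that $\gamma(F_i)\geq 2$ forces totality in (iv). The only differences are cosmetic choices of which vertices to swap (e.g., for the distance claim you delete $z_{i2},z_{j2}$ and insert one common neighbor, while the paper performs a single exchange and contradicts the first part of (i)), and, like the paper, you leave implicit the one-line check that $\gamma(F_s)=1$ is impossible when $|D\cap V(F_s)|=2$.
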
										

\begin{proof}
(i)  Let for some $j \in [n]$ is fulfilled $D \cap V(F_j) = \{u_1,u_2,..,u_r\}$, where $r\geq 2$. 
      Then $N[\{u_1,w\}] \supseteq N[\{u_2,..,u_r\}]$ for any neighbor $w$ of $u_1$ which is outside  $V(F_j)$.   
			As $D$ is a $\mu$-set of $G[\Phi]$, we have $w \not\in D$ and 	$r=2$.
				Since $w$ was chosen arbitrarily,  $N[\{u_1,u_2\}] \cap D = \{u_1,u_2\}$. 
					But then $\{u_1,u_2\}$ is a $\mu$-set of $F_j$. 
		Let $|D \cap V(F_i)| = |D \cap V(F_s)| = 2$ for some $i,s \in [n], i \not= s$. 
		Suppose $z_1,z_2,z_3$ is a shortest path in $G[\Phi]$, where $z_1 \in V(F_i)$ and   $z_3 \in V(F_s)\cap D$. 
					Clearly $z_2 \not\in V(F_i) \cup V(F_j) \cup D$. 
					Then $D^\prime = (D-\{z_3\}) \cup \{z_2\}$ is a $\mu$-set and  $z_2$ is a 
					common neighbor of the vertices in $V(F_i) \cap D$,  a contradiction.
								
(ii) By (i) all 	$x_i$'s are paired distinct and outside $D$, 
        and no two of them belong to the same $F_j$, $j \in [n]$.  
       Hence $|D^*| = |D|$, $|D^* \cap V(F_r)| \leq 1$ for all $r \in [n]$
			and since $N[\{z_{i1}, z_{i2}\}] \subset N[\{z_{i1}, u_i\}]$ and $z_{i1}$ and $u_i$ are adjacent, 
			$D^*$ is a $\mu$-set of $G[\Phi]$. 
       
	(iii)  Define a $\mu$-set $U$ such that $U=D$ when $|D \cap V(F_i)| \leq 1$ for all  $i \in [n]$, and 
					 $U=D^*$ otherwise (see (ii)). Hence $|U \cap V(F_i)| \leq 1$
	         and  since $G$ is connected of order  $n \geq 2$ and $|V(F_i)| \geq 2$ for all $i \in [n]$, 
           a graph $\left\langle V(G[\Phi])-D \right\rangle$ is connected of order at least two. 
	
(iv) 	Since  $\gamma(F_i) \geq 2$ for all $i \in [n]$, $U$ is a total dominating set of $G[\Phi]$
        and since  $\gamma(G[\Phi]) \leq  \gamma_t(G[\Phi])$, 
					$U$ is a  $\gamma_t$-set of $G[\Phi]$.
					The required now immediately follows by (iii). 
\end{proof}

\begin{theorem}\label{two}
Let $|V(F_i)| \geq 2$ for all $i \in [n]$. Then
\begin{itemize} 
      \item[(i)] 		$\gamma(G[\Phi]) = \gamma_r(G[\Phi]) = \gamma^{oc}(G[\Phi])$, and 							
			\item[(ii)]   $\gamma_t(G) = \gamma_t(G[\Phi]) = \gamma_{tr}(G[\Phi]) = \gamma_t^{oc}(G[\Phi])$. 
\end{itemize}									
\end{theorem}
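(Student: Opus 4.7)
The plan is to reduce everything to Lemma \ref{lexlemaanew}(iii), which already manufactures a single dominating (resp.\ total dominating) set whose complement is connected of order at least two. Once such a set is in hand it automatically certifies the ``harder'' parameters on the right, while the folklore chain $\gamma(H)\le \gamma_r(H),\gamma^{oc}(H)$ and $\gamma_t(H)\le \gamma_{tr}(H),\gamma_t^{oc}(H)$ supplies the trivial direction. So the whole theorem should reduce to quoting the key lemma together with these standard inequalities.

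For part (i), I would first invoke the folklore bounds $\gamma(G[\Phi])\le \gamma_r(G[\Phi])$ and $\gamma(G[\Phi])\le \gamma^{oc}(G[\Phi])$. Then I pick the $\gamma$-set $U$ guaranteed by Lemma \ref{lexlemaanew}(iii)(b): since $\langle V(G[\Phi])-U\rangle$ is connected and has at least two vertices, $U$ is simultaneously a restrained dominating set (no isolated vertices outside $U$) and an outer-connected dominating set. Hence $\gamma_r(G[\Phi])\le |U|=\gamma(G[\Phi])$ and $\gamma^{oc}(G[\Phi])\le |U|=\gamma(G[\Phi])$, closing the loop.

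Part (ii) runs along the same lines but starts one step further. The leftmost equality $\gamma_t(G)=\gamma_t(G[\Phi])$ is already provided by Theorem \ref{t=t=}, so no new argument is needed there. For the two remaining equalities I would again use the folklore bounds $\gamma_t(G[\Phi])\le \gamma_{tr}(G[\Phi])$ and $\gamma_t(G[\Phi])\le \gamma_t^{oc}(G[\Phi])$ in one direction, and Lemma \ref{lexlemaanew}(iii)(c) in the other: the $\gamma_t$-set $U$ produced there has connected complement of order at least two, so it qualifies both as a total restrained dominating set and as a total outer-connected dominating set, forcing $\gamma_{tr}(G[\Phi])\le|U|=\gamma_t(G[\Phi])$ and $\gamma_t^{oc}(G[\Phi])\le|U|=\gamma_t(G[\Phi])$.

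Because Lemma \ref{lexlemaanew} has already absorbed the real combinatorial work---controlling how a minimum (total) dominating set can intersect each layer $V(F_i)$ and guaranteeing connectedness of the complement via the hypothesis $|V(F_i)|\ge 2$---I do not anticipate any serious obstacle. The one thing to be careful about is not to inadvertently appeal to part (iv) of the lemma, which carries the extra hypothesis $\gamma(F_i)\ge 2$ for every $i$; both halves of the present theorem follow purely from parts (iii)(b) and (iii)(c), which impose no such restriction and therefore apply under the sole assumption $|V(F_i)|\ge 2$.
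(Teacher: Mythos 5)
Your proposal is correct and follows essentially the same route as the paper: the paper's proof of Theorem \ref{two} consists precisely of citing Lemma \ref{lexlemaanew}(iii) together with Theorem \ref{t=t=}, and your write-up simply makes explicit the folklore inequalities and the fact that the set $U$ from the lemma (whose complement is connected of order at least two) witnesses the reverse inequalities.
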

\begin{proof}
Immediately by Lemma \ref{lexlemaanew}(iii) and Theorem \ref{t=t=}. 
\end{proof}

Let $\mu, \nu \in \{\gamma, \gamma_r, \gamma^{oc}, \gamma_t, \gamma_{tr}, \gamma_t^{oc}\}$. 
If the sets of all $\mu$-sets and all $\nu$-sets of a graph $H$ coincide, then we say 
  $\mu(H)$ {\em strongly equal} to $\nu(H)$, written $\mu(H) \equiv \nu(H)$.

\begin{theorem}\label{three}
Let $|V(F_i)| \geq 3$ for all $i \in [n]$. Then
\begin{itemize} 
      \item[(i)] 		$\gamma(G[\Phi]) \equiv \gamma_r(G[\Phi]) \equiv \gamma^{oc}(G[\Phi])$, and 							
			\item[(ii)]   $\gamma_t(G[\Phi]) \equiv \gamma_{tr}(G[\Phi]) \equiv \gamma_t^{oc}(G[\Phi])$. 
\end{itemize}									
\end{theorem}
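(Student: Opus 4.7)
The plan is to reduce strong equality to a single nontrivial direction, then exploit Lemma \ref{lexlemaanew}(i) together with the new hypothesis $|V(F_i)|\geq 3$ to rule out "degenerate" layers. First I would observe that Theorem \ref{two} already supplies the numerical equalities $\gamma(G[\Phi])=\gamma_r(G[\Phi])=\gamma^{oc}(G[\Phi])$ and $\gamma_t(G[\Phi])=\gamma_{tr}(G[\Phi])=\gamma_t^{oc}(G[\Phi])$. From this, one direction of each strong equality is automatic: every $\gamma_r$-set (resp.\ $\gamma^{oc}$-set) is a dominating set of cardinality $\gamma(G[\Phi])$, hence a $\gamma$-set; similarly every $\gamma_{tr}$- or $\gamma_t^{oc}$-set is a $\gamma_t$-set because it is a totally dominating set of the right cardinality. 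So the real content is to show that every $\gamma$-set of $G[\Phi]$ is simultaneously a $\gamma_r$-set and a $\gamma^{oc}$-set, and every $\gamma_t$-set is simultaneously a $\gamma_{tr}$-set and a $\gamma_t^{oc}$-set.

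Let $S$ be a $\gamma$-set or $\gamma_t$-set of $G[\Phi]$. The key observation, which I shall use throughout, is that $V(F_i)\setminus S\neq\emptyset$ for every $i\in[n]$: indeed Lemma \ref{lexlemaanew}(i) gives $|S\cap V(F_i)|\leq 2$ while the hypothesis forces $|V(F_i)|\geq 3$. Call this fact $(\star)$. From $(\star)$ the restrained part is almost immediate: given any $v\in V(F_i)\setminus S$, the connectedness of $G$ together with $n\geq 2$ yields a neighbor $\textbf{j}$ of $\textbf{i}$ in $G$; by $(\star)$ there is some $u\in V(F_j)\setminus S$, and the definition of the generalized lexicographic product makes $v$ adjacent to $u$ in $G[\Phi]$. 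So no vertex of $V(G[\Phi])\setminus S$ is isolated in $\langle V(G[\Phi])\setminus S\rangle$, which upgrades a $\gamma$-set to a $\gamma_r$-set and a $\gamma_t$-set to a $\gamma_{tr}$-set.

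For the outer-connected half I have to show that $\langle V(G[\Phi])\setminus S\rangle$ is connected. Given two vertices $v\in V(F_i)\setminus S$ and $w\in V(F_k)\setminus S$, I would distinguish two cases. If $i=k$, pick any $\textbf{j}\in N_G(\textbf{i})$ and, by $(\star)$, some $u\in V(F_j)\setminus S$; then $v$-$u$-$w$ is a path in $\langle V(G[\Phi])\setminus S\rangle$. If $i\neq k$, walk along a shortest $\textbf{i}$–$\textbf{k}$ path $\textbf{i}=\textbf{j}_\textbf{0},\textbf{j}_\textbf{1},\ldots,\textbf{j}_\textbf{r}=\textbf{k}$ in $G$, choose some $v_s\in V(F_{j_s})\setminus S$ for $0<s<r$ using $(\star)$, and set $v_0=v$, $v_r=w$; consecutive terms live in adjacent layers and hence are adjacent in $G[\Phi]$. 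Thus $S$ is also an outer-connected (or total outer-connected, in the $\gamma_t$ case) dominating set of the correct cardinality, completing the argument.

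The main obstacle is conceptually the case $|S\cap V(F_i)|=2$ allowed by Lemma \ref{lexlemaanew}(i); here a priori nothing prevents a layer from being entirely swallowed by $S$, which would destroy both the non-isolation argument and the connectivity argument. The strict improvement $|V(F_i)|\geq 3$ over the hypothesis of Theorem \ref{two} is precisely what kills this obstruction via $(\star)$, and I do not foresee any additional combinatorial difficulty beyond organizing the two short case analyses above.
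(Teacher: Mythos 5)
Your proposal is correct and follows essentially the same route as the paper: invoke Lemma \ref{lexlemaanew}(i) to get $|S\cap V(F_i)|\leq 2<|V(F_i)|$ so that every layer keeps a vertex outside $S$, conclude that any $\gamma$- or $\gamma_t$-set is automatically restrained and outer-connected, and then let the numerical equalities of Theorem \ref{two} convert this into strong equality. You merely spell out the non-isolation and connectivity arguments that the paper leaves implicit.
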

\begin{proof}
By Lemma \ref{lexlemaanew}(i) we know that for any $\mu$-set $D$, $\mu \in \{\gamma, \gamma_t\}$, 
of $G[\Phi]$ is fulfilled $|D \cap V(F_i)| \leq 2$ for all  $i=1,2,..,n$. 
Since $n \geq 2$ and $|V(F_i)| \geq 3$ for all $i \in [n]$, $D$ is both restrained and outer-connected. 
The rest immediately follows by the previous theorem.  
\end{proof}

\begin{theorem}\label{gamma=2}
If $\gamma(F_i) \geq 2$ for all $i \in [n]$, then
\[
\gamma(G[\Phi]) = \gamma_r(G[\Phi]) = \gamma_t(G[\Phi]) =  \gamma_{tr}(G[\Phi]) = 
        \gamma^{oc}(G[\Phi]) =  \gamma_t^{oc}(G[\Phi]).
\]
If $\gamma(F_i) \geq 3$ for all $i \in [n]$, then 
\[
\gamma(G[\Phi]) \equiv \gamma_r(G[\Phi]) \equiv \gamma_t(G[\Phi]) \equiv \gamma_{tr}(G[\Phi])  
        \equiv \gamma^{oc}(G[\Phi]) \equiv  \gamma_t^{oc}(G[\Phi]).
\]
\end{theorem}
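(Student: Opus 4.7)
The plan is to reduce both statements to Theorems \ref{two} and \ref{three} together with the key Lemma \ref{lexlemaanew}, exploiting that $\gamma(F_i)\ge 2$ forces $|V(F_i)|\ge 2$ and $\gamma(F_i)\ge 3$ forces $|V(F_i)|\ge 3$. For the first equality chain, Theorem \ref{two} already delivers the two sub-chains $\gamma(G[\Phi])=\gamma_r(G[\Phi])=\gamma^{oc}(G[\Phi])$ and $\gamma_t(G[\Phi])=\gamma_{tr}(G[\Phi])=\gamma_t^{oc}(G[\Phi])$, so only the identity $\gamma(G[\Phi])=\gamma_t(G[\Phi])$ remains. This is exactly what Lemma \ref{lexlemaanew}(iv) asserts: under the assumption $\gamma(F_i)\ge 2$, the $\gamma$-set $U$ produced in part (iii) is simultaneously a $\gamma_t$-set of $G[\Phi]$, which closes the numerical chain.

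For the second statement, Theorem \ref{three} immediately supplies the two strong-equality sub-chains $\gamma\equiv\gamma_r\equiv\gamma^{oc}$ and $\gamma_t\equiv\gamma_{tr}\equiv\gamma_t^{oc}$ on $G[\Phi]$. The remaining task is therefore to prove the linking relation $\gamma(G[\Phi])\equiv\gamma_t(G[\Phi])$. The inclusion ``every $\gamma_t$-set of $G[\Phi]$ is a $\gamma$-set'' is automatic from the first part, since the two numerical values coincide and any total dominating set dominates. So the nontrivial direction to be shown is that every $\gamma$-set $D$ of $G[\Phi]$ is already a total dominating set.

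I would verify this as follows. By Lemma \ref{lexlemaanew}(i) we have $|D\cap V(F_i)|\le 2$ for every $i$; equality would force $D\cap V(F_i)$ to be a dominating set of $F_i$ of size two, contradicting $\gamma(F_i)\ge 3$. Hence $|D\cap V(F_i)|\le 1$ for each $i$. Now fix any $v\in D$, say $v\in V(F_i)$. Because $\gamma(F_i)\ge 2$, the vertex $v$ alone does not dominate $F_i$, so there exists $w\in V(F_i)\setminus N_{F_i}[v]$ with $w\ne v$. Since $D$ dominates $G[\Phi]$ and $D\cap V(F_i)=\{v\}$, there must exist $u\in D\setminus V(F_i)$ with $uw\in E(G[\Phi])$; by the definition of the generalized lexicographic product, such a $u$ lies in some $V(F_j)$ with $\mathbf{i}\mathbf{j}\in E(G)$ and is therefore adjacent to \emph{every} vertex of $V(F_i)$, in particular to $v$. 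Thus $v$ has a neighbor in $D$, so $D$ is total and, since $|D|=\gamma(G[\Phi])=\gamma_t(G[\Phi])$, a $\gamma_t$-set.

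The main obstacle, such as it is, is the argument in the previous paragraph; the rest of the theorem is essentially packaging of Lemma \ref{lexlemaanew} and Theorems \ref{two}, \ref{three}. The combinatorial ingredient that makes this linking step work is the standard but decisive observation that an external neighbor of a single vertex in $V(F_i)$ is automatically adjacent to all of $V(F_i)$, which turns the existence of an undominated vertex of $F_i$ inside $\langle V(F_i)\rangle$ into the existence of a neighbor of $v$ in $D$.
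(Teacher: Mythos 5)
Your proposal is correct and follows essentially the same route as the paper: the first chain comes from Lemma \ref{lexlemaanew}(iv) (equivalently, Theorem \ref{two} plus the linking equality $\gamma(G[\Phi])=\gamma_t(G[\Phi])$), and the second chain reduces via Theorem \ref{three} to showing that every $\gamma$-set is total dominating when $\gamma(F_i)\ge 3$, which you derive from Lemma \ref{lexlemaanew}(i) exactly as the paper intends. Your write-up merely makes explicit the step the paper leaves implicit, namely that $|D\cap V(F_i)|\le 1$ forces each $v\in D$ to have an external neighbor in $D$ adjacent to all of $V(F_i)$.
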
									
\begin{proof}
The first equality chain immediately follows by Lemma \ref{lexlemaanew}(iv). 
Assume now  $\gamma(F_i) \geq 3$ for all $i\in [n]$ and let $D$ be any $\gamma$-set of $G[\Phi]$. 
Lemma \ref{lexlemaanew}(i)  implies that $D$ must be total dominating. 
The required now follows by Theorem \ref{three}. 
\end{proof}

A dominating set $D$ of a graph  $G$ is {\em efficient dominating} if 
every vertex in $V(G)$ is dominated by exactly one vertex of $D$. 
Note that each efficient dominating set is a $\gamma$-set.

\begin{theorem}\label{eff}
Let  $D$ be a $\gamma$-set of $G[\Phi]$, $|V(F_i)| \geq 2$ and  $|D\cap V(F_i)| \not= 1$ for all $i \in [n]$. 
Let all $F_i$'s for which $|D\cap V(F_i)| \not= 0$ be $F_{i_1},F_{i_2},..,F_{i_s}$. 
Then $|D \cap V(F_{i_r})| = 2$ for all $r \in [s]$, $\{\textbf{i}_\textbf{1}, \textbf{i}_\textbf{2},.., \textbf{i}_\textbf{s}\}$
	is an efficient dominating set of $G$ and 
\begin{align}
2\gamma(G) 
                          & = \gamma_t(G) = \gamma(G[\Phi]) = \gamma_r(G[\Phi]) = \gamma_t(G[\Phi]) =  \gamma_{tr}(G[\Phi]) =  \gamma^{oc}(G[\Phi]) \nonumber \\
                          & =  \gamma_t^{oc}(G[\Phi]) = \gamma_p(G[\Phi]). \nonumber
\end{align}
\end{theorem}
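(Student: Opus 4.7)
The plan is to extract structural information from Lemma \ref{lexlemaanew}(i), then derive the numerical values, and finally invoke Theorem \ref{two} plus a direct construction for the paired parameter.

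First I would combine the hypothesis $|D \cap V(F_i)| \neq 1$ with Lemma \ref{lexlemaanew}(i) (applicable because $|V(F_i)| \geq 2$) to conclude that $|D \cap V(F_i)| \in \{0,2\}$ for all $i$. This gives $|D \cap V(F_{i_r})| = 2$ for every $r \in [s]$ and therefore $\gamma(G[\Phi]) = |D| = 2s$. Lemma \ref{lexlemaanew}(i) also tells me that for distinct $i_r, i_{r'}$ the distance between any vertex of $F_{i_r}$ and any vertex of $F_{i_{r'}}$ in $G[\Phi]$ is at least three, which via the isometry remark (A) translates to $dist_G(\textbf{i}_r, \textbf{i}_{r'}) \geq 3$.

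Next I would prove that $I = \{\textbf{i}_1, \ldots, \textbf{i}_s\}$ is an efficient dominating set of $G$. For domination, any $\textbf{j} \notin I$ satisfies $D \cap V(F_j) = \emptyset$, so a vertex $x \in V(F_j)$ is dominated by some $y \in D \cap V(F_k)$ with $k \neq j$; then $\textbf{j}\textbf{k} \in E(G)$ and $k \in \{i_1,\ldots,i_s\}$. For efficiency, two distinct $\textbf{i}_r, \textbf{i}_{r'}$ cannot be adjacent nor share a common neighbor in $G$, since either would force $dist_G(\textbf{i}_r, \textbf{i}_{r'}) \leq 2$. This yields $\gamma(G) \leq s$. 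Combining Theorem \ref{t=t=} with the folklore inequality $\gamma_t(G) \leq 2\gamma(G)$ and the trivial bound $\gamma(G[\Phi]) \leq \gamma_t(G[\Phi])$ gives
\[
2s = \gamma(G[\Phi]) \leq \gamma_t(G[\Phi]) = \gamma_t(G) \leq 2\gamma(G) \leq 2s,
\]
forcing $\gamma(G) = s$ and $\gamma_t(G) = 2s = \gamma_t(G[\Phi])$. Theorem \ref{two} then pins all of $\gamma_r(G[\Phi])$, $\gamma^{oc}(G[\Phi])$, $\gamma_{tr}(G[\Phi])$, $\gamma_t^{oc}(G[\Phi])$ to the common value $2s$.

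It remains to handle $\gamma_p(G[\Phi])$. The lower bound $\gamma_p(G[\Phi]) \geq \gamma_t(G[\Phi]) = 2s$ is folklore. For the matching upper bound I would, for each $r$, choose a neighbor $\textbf{j}_r \in N_G(\textbf{i}_r)$ (which exists because $G$ is connected of order at least two). The efficient domination property ensures the $\textbf{j}_r$ are pairwise distinct and disjoint from $I$, since any coincidence or adjacency would produce either an edge inside $I$ or a shared neighbor of two elements of $I$. Selecting a $G$-layer $U = \{u_1, \ldots, u_n\}$ of $G[\Phi]$, I would take $P = \{u_{\textbf{i}_r}, u_{\textbf{j}_r} : r \in [s]\}$. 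By property (A) the isomorphism $\langle U \rangle \simeq G$ transports the perfect matching $\{\{\textbf{i}_r, \textbf{j}_r\}: r \in [s]\}$ of $G[I \cup \{\textbf{j}_1,\ldots,\textbf{j}_s\}]$ to a perfect matching of $\langle P \rangle$, and a case analysis on the $F_k$ containing an arbitrary vertex $x$ (namely $k \in I$, $k \in \{j_1,\ldots,j_s\}$, or otherwise) shows that $x$ has a neighbor in $P$ since adjacency in $G$ lifts to full bipartite adjacency between the corresponding $F_k$'s in $G[\Phi]$. Thus $P$ is a paired dominating set of cardinality $2s$.

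The main routine obstacle is nothing deep but keeping the bookkeeping straight in the last step, namely verifying that $\langle P \rangle$ really contains a perfect matching and that $P$ dominates every vertex of $G[\Phi]$; both reduce to an appeal to the isomorphism in (A) together with the efficient-domination property of $I$.
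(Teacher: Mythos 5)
Your proof is correct and follows essentially the same route as the paper: Lemma \ref{lexlemaanew}(i) gives the $\{0,2\}$ intersection pattern and the efficient dominating set, the chain $2s=\gamma(G[\Phi])\leq\gamma_t(G[\Phi])=\gamma_t(G)\leq 2\gamma(G)\leq 2s$ collapses the first block of equalities, Theorem \ref{two} handles the restrained and outer-connected variants, and a paired dominating set of size $2s$ is built by matching each $\textbf{i}_r$ with an external neighbour. The only cosmetic difference is that the paper obtains the paired set by replacing one vertex of each pair $D\cap V(F_{i_r})$ with an outside neighbour (which also yields the $(\mathcal{M},\mathcal{C})$ observation of Remark \ref{roc}), whereas you assemble it afresh inside a $G$-layer.
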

\begin{proof}
By Lemma \ref{lexlemaanew}(i) the following hold: 
(a)  $|D \cap V(F_i)| \in \{0, 2\}$ for all  $i \in [n]$, 
(b) $|D\cap V(F_{i})| = 2$  if and only if $i \in \{i_1,i_2,..,i_s\}$,
(c)  $\{\textbf{i}_\textbf{1}, \textbf{i}_\textbf{2},..,\textbf{i}_\textbf{s}\}$ 	is an efficient dominating set of $G$ and 
(d)  $D\cap V(F_{i_r})$ is a $\gamma$-set of $F_{i_r}$ for all $r \in [s]$. 
Therefore $2\gamma(G) = \gamma(G[\Phi])$
and by  $\gamma_t(G[\Phi]) = \gamma_t(G) \leq 2\gamma(G) = \gamma(G[\Phi]) \leq  \gamma_t(G[\Phi])$, 
we have $\gamma_t(G[\Phi]) = \gamma(G[\Phi])$. 
In view of Theorem \ref{two}, it remains to prove that $\gamma(G[\Phi]) = \gamma_p(G[\Phi])$. 

Let $D \cap V(F_{i_r}) = \{z_{r1}, z_{r2}\}$ 	and $x_r  \in N(z_{r2}) -V(F_{i_r})$, $r \in [s]$. 
Since   $\{\textbf{i}_\textbf{1}, \textbf{i}_\textbf{2},..,\textbf{i}_\textbf{s}\}$ 	is an efficient dominating set of $G$, 
the set $D^* =( D -\cup_{r \in [s]}\{z_{r2}\}) \cup \cup_{r\in [s]}\{x_r\}$ 
is a dominating $(\mathcal{M}, \mathcal{C})$-set of $G[\Phi]$ of cardinality $|D^*| = |D| = \gamma(G[\Phi])$. 
Since each dominating $(\mathcal{M}, \mathcal{C})$-set is a dominating $(\mathcal{M}, \mathcal{I})$-set, 
 $\gamma_p(G[\Phi]) = \gamma(G[\Phi])$. 
\end{proof}

\begin{remark}\label{roc}
In the end of the proof of the above theorem we obtain that a set $D^*$ 
is a dominating $(\mathcal{M}, \mathcal{C})$-set of $G[\Phi]$. 
Hence under the assumptions of Theorem \ref{eff}, 
$2\gamma(G) = \gamma_{(\mathcal{M},\mathcal{C})}(G[\Phi])$. 
It is quite natural to call the  numbers $\gamma_{(\mathcal{M},\mathcal{C})}(G)$ and 
$\Gamma_{(\mathcal{M},\mathcal{C})}(G)$ the paired outer connected and 
upper paired outer connected domination numbers of a graph $G$. 
\end{remark}

\section{Well $\gamma_{(\mathcal{A}, \mathcal{B})}$-dominated graphs}

We begin with an  obvious but very useful observation.

\begin{observation}\label{eqchains}
Given a graph $G[\Phi]$ and properties $\mathcal{A}, \mathcal{B} \subseteq  \mathcal{I}$. 
Assume that $F_l$ has a dominating $(\mathcal{A}, \mathcal{B})$-set for all $l \in [n]$, 
and denote by $\mathscr{D}_{G[\Phi]}(\mathcal{A}, \mathcal{B} )$ the family of all subsets  $U$ of $V(G[\Phi])$  
such that $U = \cup_{r=1}^s D_{l_r}$, where 
$\{\textbf{l}_\textbf{1}, \textbf{l}_\textbf{2},..,\textbf{l}_\textbf{s}\}$
											is a maximal independent set of $G$ and $D_{l_i}$ is a minimal dominating  $(\mathcal{A}, \mathcal{B})$-set 
											of $F_{l_i}$, $i=1,2,..,s$. 
Let all elements of $\mathscr{D}_{G[\Phi]}$ be minimal dominating  $(\mathcal{A}, \mathcal{B})$-sets of  $G[\Phi]$. 
Then 			
\begin{align}
\label{eq:n1}
\gamma_{(\mathcal{A}, \mathcal{B})}(G[\Phi]) 	
                           & \leq \min\{|U| \mid U \in \mathscr{D}_{G[\Phi]} (\mathcal{A}, \mathcal{B})\} \nonumber \\
                          & =	 \min\{ \Sigma_{r=1}^s\gamma_{(\mathcal{A}, \mathcal{B})}(F_{i_r}) \mid
													\{\textbf{i}_\textbf{1}, \textbf{i}_\textbf{2},..,\textbf{i}_\textbf{s}\}  	 \in In(G)\}\nonumber \\
	                       	&\leq \min\{\Sigma_{r=1}^k\gamma_{(\mathcal{A}, \mathcal{B})} (F_{j_r}) \mid 
													\{\textbf{j}_\textbf{1}, \textbf{j}_\textbf{2},..,\textbf{j}_\textbf{k}\}   \mbox{\ is an $i$-set of  $G$}\} \\
													& \leq i(G)\max\{\gamma_{(\mathcal{A}, \mathcal{B})}(F_j) \mid j \in [n]\} \nonumber
\end{align}
and 
\begin{align}
\label{eq:n2}
\Gamma_{(\mathcal{A}, \mathcal{B})}(G[\Phi]) 
                     	    & \geq \max\{|U| \mid U \in \mathscr{D}_{G[\Phi]}(\mathcal{A}, \mathcal{B})\} \nonumber \\
                          & =	 \max\{ \Sigma_{r=1}^s\Gamma_{(\mathcal{A}, \mathcal{B})}(F_{i_r}) \mid 
													 \{\textbf{i}_\textbf{1}, \textbf{i}_\textbf{2},..,\textbf{i}_\textbf{s}\} 	 \in In(G)\}\nonumber \\
													&\geq \max\{\Sigma_{r=1}^s\Gamma_{(\mathcal{A}, \mathcal{B})} (F_{i_r}) \mid
													\{\textbf{j}_\textbf{1}, \textbf{j}_\textbf{2},..,\textbf{j}_\textbf{k}\}  \mbox{\ is a $\beta_0$-set of  $G$}\} \\
													& \geq \beta_0(G)\min\{\Gamma_{(\mathcal{A}, \mathcal{B})}(F_j) \mid j \in [n]\}, \nonumber
\end{align}
where $In(G)$ is the set of all  maximal independent sets of $G$.
\end{observation}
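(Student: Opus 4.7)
The plan is to verify the two chains of inequalities successively from the outside in, treating each step as a direct consequence of definitions together with the standing hypothesis that every element of $\mathscr{D}_{G[\Phi]}(\mathcal{A}, \mathcal{B})$ is a minimal dominating $(\mathcal{A}, \mathcal{B})$-set of $G[\Phi]$. First I would establish the outermost bounds: by hypothesis every $U \in \mathscr{D}_{G[\Phi]}(\mathcal{A}, \mathcal{B})$ is a minimal dominating $(\mathcal{A}, \mathcal{B})$-set of $G[\Phi]$, so $\gamma_{(\mathcal{A}, \mathcal{B})}(G[\Phi]) \leq |U|$ and $\Gamma_{(\mathcal{A}, \mathcal{B})}(G[\Phi]) \geq |U|$ for each such $U$. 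Taking the minimum (respectively maximum) over $U \in \mathscr{D}_{G[\Phi]}(\mathcal{A}, \mathcal{B})$ immediately yields the first inequalities in \eqref{eq:n1} and \eqref{eq:n2}.

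Next I would prove the equalities in each chain. Fix a maximal independent set $M = \{\mathbf{l}_1, \ldots, \mathbf{l}_s\} \in In(G)$. The elements of $\mathscr{D}_{G[\Phi]}(\mathcal{A}, \mathcal{B})$ arising from $M$ are exactly the unions $\cup_{r=1}^s D_{l_r}$, where each $D_{l_r}$ ranges over all minimal dominating $(\mathcal{A}, \mathcal{B})$-sets of $F_{l_r}$; since these are pairwise disjoint, $|U| = \sum_{r=1}^s |D_{l_r}|$. Because every $\gamma_{(\mathcal{A}, \mathcal{B})}$-set (resp.\ every $\Gamma_{(\mathcal{A}, \mathcal{B})}$-set) of $F_{l_r}$ is a minimal dominating $(\mathcal{A}, \mathcal{B})$-set, the minimum of $|D_{l_r}|$ over all such choices equals $\gamma_{(\mathcal{A}, \mathcal{B})}(F_{l_r})$ and the maximum equals $\Gamma_{(\mathcal{A}, \mathcal{B})}(F_{l_r})$. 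Minimising (respectively maximising) over all $M \in In(G)$ then gives the middle equalities in \eqref{eq:n1} and \eqref{eq:n2}.

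Finally I would handle the remaining inequalities. Since every independent dominating set (and in particular every $i$-set or $\beta_0$-set) is a maximal independent set, the collections of $i$-sets and $\beta_0$-sets of $G$ are subfamilies of $In(G)$. Hence the minimum (respectively maximum) over a subfamily dominates (respectively is dominated by) the one over all of $In(G)$, producing the next inequality in each chain. The very last estimates are just the trivial bounds: for any $i$-set $\{\mathbf{j}_1, \ldots, \mathbf{j}_k\}$ we have $k = i(G)$ and $\sum_{r=1}^k \gamma_{(\mathcal{A}, \mathcal{B})}(F_{j_r}) \leq i(G) \max_j \gamma_{(\mathcal{A}, \mathcal{B})}(F_j)$, and symmetrically for $\beta_0$-sets one has $\sum \Gamma_{(\mathcal{A}, \mathcal{B})}(F_{i_r}) \geq \beta_0(G) \min_j \Gamma_{(\mathcal{A}, \mathcal{B})}(F_j)$. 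There is no serious obstacle here; the only subtlety worth flagging is the observation that $\gamma$-sets and $\Gamma$-sets are always minimal dominating sets, which is what lets one translate between ``minimum of $|U|$ over $\mathscr{D}_{G[\Phi]}$'' and the sums $\sum \gamma_{(\mathcal{A}, \mathcal{B})}(F_{i_r})$, and the fact that independent dominating sets are automatically maximal independent, which justifies the descent from $In(G)$ to $i$-sets and $\beta_0$-sets.
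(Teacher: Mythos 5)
Your proof is correct. The paper states this Observation without proof (calling it ``obvious''), and your argument is precisely the routine definitional unpacking the author intends: the hypothesis that every $U\in\mathscr{D}_{G[\Phi]}(\mathcal{A},\mathcal{B})$ is a minimal dominating $(\mathcal{A},\mathcal{B})$-set gives the outer bounds, disjointness of the $F_i$'s plus the facts that every $\gamma_{(\mathcal{A},\mathcal{B})}$-set is a minimal dominating $(\mathcal{A},\mathcal{B})$-set and that $i$-sets and $\beta_0$-sets are maximal independent give the rest.
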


As a first consequence of  this observation 
a necessary condition for a generalized lexicographic product  of graphs to be 
well $\gamma_{(\mathcal{A}, \mathcal{B})}$-dominated follows. 

\begin{corollary}\label{necessary}
Under the conditions and notation of Observation \ref{eqchains}, 
if $G[\Phi]$ is well $\gamma_{(\mathcal{A}, \mathcal{B})}$-dominated, then 
all $F_i$'s are well $\gamma_{(\mathcal{A}, \mathcal{B})}$-dominated and 
$\gamma_{(\mathcal{A}, \mathcal{B})}(G[\Phi]) = \Sigma_{r=1}^s\gamma_{(\mathcal{A}, \mathcal{B})}(F_{i_r})$ 
for each $\{\textbf{i}_\textbf{1}, \textbf{i}_\textbf{2},..,\textbf{i}_\textbf{s}\} 	 \in In(G)$. 
\end{corollary}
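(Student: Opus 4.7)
The plan is to squeeze the two inequality chains of Observation \ref{eqchains} against the hypothesis $\gamma_{(\mathcal{A},\mathcal{B})}(G[\Phi]) = \Gamma_{(\mathcal{A},\mathcal{B})}(G[\Phi])$, and then unpack the resulting string of equalities.

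First, I would fix an arbitrary maximal independent set $S = \{\mathbf{i}_1,\mathbf{i}_2,\ldots,\mathbf{i}_s\} \in In(G)$. From the displayed chain \eqref{eq:n1} applied to $S$ we have $\gamma_{(\mathcal{A},\mathcal{B})}(G[\Phi]) \le \sum_{r=1}^s \gamma_{(\mathcal{A},\mathcal{B})}(F_{i_r})$, and from \eqref{eq:n2} applied to the same $S$ we have $\sum_{r=1}^s \Gamma_{(\mathcal{A},\mathcal{B})}(F_{i_r}) \le \Gamma_{(\mathcal{A},\mathcal{B})}(G[\Phi])$. Combining these with the hypothesis $\gamma_{(\mathcal{A},\mathcal{B})}(G[\Phi]) = \Gamma_{(\mathcal{A},\mathcal{B})}(G[\Phi])$ yields
\[
\sum_{r=1}^s \Gamma_{(\mathcal{A},\mathcal{B})}(F_{i_r}) \;\le\; \Gamma_{(\mathcal{A},\mathcal{B})}(G[\Phi]) \;=\; \gamma_{(\mathcal{A},\mathcal{B})}(G[\Phi]) \;\le\; \sum_{r=1}^s \gamma_{(\mathcal{A},\mathcal{B})}(F_{i_r}).
\]

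Second, I would bring in the trivial pointwise inequality $\gamma_{(\mathcal{A},\mathcal{B})}(F_{i_r}) \le \Gamma_{(\mathcal{A},\mathcal{B})}(F_{i_r})$ valid for each $r$. Summing over $r$ and comparing with the display above, every inequality must collapse to an equality. From the collapse of the two outer sums it follows termwise that $\gamma_{(\mathcal{A},\mathcal{B})}(F_{i_r}) = \Gamma_{(\mathcal{A},\mathcal{B})}(F_{i_r})$ for every $r \in [s]$, and from the middle equality that $\gamma_{(\mathcal{A},\mathcal{B})}(G[\Phi]) = \sum_{r=1}^s \gamma_{(\mathcal{A},\mathcal{B})}(F_{i_r})$, which is the second conclusion of the corollary.

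Finally, I would note that every vertex of $G$ belongs to at least one maximal independent set (extend $\{\mathbf{i}\}$ greedily). Applying the termwise equality above to a max\-imal independent set containing $\mathbf{i}$ gives $\gamma_{(\mathcal{A},\mathcal{B})}(F_i) = \Gamma_{(\mathcal{A},\mathcal{B})}(F_i)$, so every $F_i$ is well $\gamma_{(\mathcal{A},\mathcal{B})}$-dominated. I do not expect any real obstacle: the whole argument is a sandwich of two inequalities that the observation delivers for free, and the only minor point needing a one\-liner is the remark that every vertex lies in some maximal independent set so that each factor is covered.
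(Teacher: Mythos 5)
Your proposal is correct and follows essentially the same route as the paper: sandwich $\sum_r\Gamma_{(\mathcal{A},\mathcal{B})}(F_{i_r})\le\Gamma_{(\mathcal{A},\mathcal{B})}(G[\Phi])=\gamma_{(\mathcal{A},\mathcal{B})}(G[\Phi])\le\sum_r\gamma_{(\mathcal{A},\mathcal{B})}(F_{i_r})$ against the termwise inequality $\gamma_{(\mathcal{A},\mathcal{B})}(F_{i_r})\le\Gamma_{(\mathcal{A},\mathcal{B})}(F_{i_r})$ and let everything collapse. The paper phrases this via the min/max over all maximal independent sets and leaves the final unpacking implicit, whereas you work with a fixed maximal independent set and spell out the (necessary) remark that every vertex of $G$ lies in some maximal independent set; this is only a difference of presentation.
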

\begin{proof}
By (\ref{eq:n1}) and (\ref{eq:n2}) we have 
$\gamma_{(\mathcal{A}, \mathcal{B})}(G[\Phi]) \leq \min\{ \Sigma_{r=1}^s\gamma_{(\mathcal{A}, \mathcal{B})}(F_{i_r}) \mid
													\{\textbf{i}_\textbf{1}, \textbf{i}_\textbf{2},..,\textbf{i}_\textbf{s}\} 	 \in In(G)\}$  and 
									$\max\{ \Sigma_{r=1}^s\Gamma_{(\mathcal{A}, \mathcal{B})}(F_{i_r}) \mid 
													\{\textbf{i}_\textbf{1}, \textbf{i}_\textbf{2},..,\textbf{i}_\textbf{s}\} 	 \in In(G)\} 
													\leq  \Gamma_{(\mathcal{A}, \mathcal{B})}(G[\Phi])$, respectively. 
Since 	$G[\Phi]$ is well $\gamma_{(\mathcal{A}, \mathcal{B})}$-dominated, 
 $\gamma_{(\mathcal{A}, \mathcal{B})}(G[\Phi]) = \min\{ \Sigma_{r=1}^s\gamma_{(\mathcal{A}, \mathcal{B})}(F_{i_r}) \mid
													\{\textbf{i}_\textbf{1}, \textbf{i}_\textbf{2},..,\textbf{i}_\textbf{s}\} 	 \in In(G)\} = 
	\max\{ \Sigma_{r=1}^s\Gamma_{(\mathcal{A}, \mathcal{B})}(F_{i_r}) \mid 
													\{\textbf{i}_\textbf{1}, \textbf{i}_\textbf{2},..,\textbf{i}_\textbf{s}\} 	 \in In(G)\} = 
													\Gamma_{(\mathcal{A}, \mathcal{B})}(G[\Phi])$.  
 	This equality chain immediately implies the required. 											
\end{proof}

\begin{corollary}\label{corchain}
Under the conditions and notation of Observation \ref{eqchains}, assume that 
$\gamma_{(\mathcal{A}, \mathcal{B})}(F_1) = \gamma_{(\mathcal{A}, \mathcal{B})}(F_2) =..=\gamma_{(\mathcal{A}, \mathcal{B})}(F_n)$ 
and $\Gamma_{(\mathcal{A}, \mathcal{B})}(F_1) = \Gamma_{(\mathcal{A}, \mathcal{B})}(F_2) =..=\Gamma_{(\mathcal{A}, \mathcal{B})}(F_n)$.
Then 
\begin{equation}\label{eq:n3}
\gamma_{(\mathcal{A}, \mathcal{B})}(G[\Phi]) \leq i(G)\gamma_{(\mathcal{A}, \mathcal{B})}(F_1)
 \leq \beta_0(G)\Gamma_{(\mathcal{A}, \mathcal{B})}(F_1) \leq \Gamma_{(\mathcal{A}, \mathcal{B})}(G[\Phi]). 
\end{equation}
If $G[\Phi]$ is well $\gamma_{(\mathcal{A}, \mathcal{B})}$-dominated, 
 then $G$ is well covered.
\end{corollary}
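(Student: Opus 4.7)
The plan is to obtain the inequality chain (\ref{eq:n3}) as a direct specialization of Observation \ref{eqchains}, and then use its collapse under the well-dominated hypothesis, together with Corollary \ref{necessary}, to conclude that $i(G) = \beta_0(G)$.

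For the three inequalities in (\ref{eq:n3}), I would argue as follows. Under the hypothesis, the $\max$ in the final line of (\ref{eq:n1}) equals $\gamma_{(\mathcal{A},\mathcal{B})}(F_1)$, which gives at once
$\gamma_{(\mathcal{A},\mathcal{B})}(G[\Phi]) \le i(G)\,\gamma_{(\mathcal{A},\mathcal{B})}(F_1)$.
Symmetrically, the $\min$ in the final line of (\ref{eq:n2}) equals $\Gamma_{(\mathcal{A},\mathcal{B})}(F_1)$, so
$\beta_0(G)\,\Gamma_{(\mathcal{A},\mathcal{B})}(F_1)\le \Gamma_{(\mathcal{A},\mathcal{B})}(G[\Phi])$.
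The middle inequality $i(G)\,\gamma_{(\mathcal{A},\mathcal{B})}(F_1)\le \beta_0(G)\,\Gamma_{(\mathcal{A},\mathcal{B})}(F_1)$ is the product of the two elementary bounds $i(G)\le\beta_0(G)$ and $\gamma_{(\mathcal{A},\mathcal{B})}(F_1)\le\Gamma_{(\mathcal{A},\mathcal{B})}(F_1)$, both quantities being non-negative.

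For the second assertion, suppose $G[\Phi]$ is well $\gamma_{(\mathcal{A},\mathcal{B})}$-dominated, so that the outermost terms of (\ref{eq:n3}) coincide. Every inequality in the chain then becomes an equality; in particular
\[
 i(G)\,\gamma_{(\mathcal{A},\mathcal{B})}(F_1) \;=\; \beta_0(G)\,\Gamma_{(\mathcal{A},\mathcal{B})}(F_1).
\]
By Corollary \ref{necessary}, every factor $F_l$ is itself well $\gamma_{(\mathcal{A},\mathcal{B})}$-dominated, so $\gamma_{(\mathcal{A},\mathcal{B})}(F_1)=\Gamma_{(\mathcal{A},\mathcal{B})}(F_1)$. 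Since $F_1$ is a non-empty graph admitting a dominating $(\mathcal{A},\mathcal{B})$-set, this common value is a positive integer, and can be cancelled from both sides of the displayed equality to yield $i(G)=\beta_0(G)$. That is, $G$ is well covered.

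The only subtle point, which is not really an obstacle, is verifying that the common value $\gamma_{(\mathcal{A},\mathcal{B})}(F_1)=\Gamma_{(\mathcal{A},\mathcal{B})}(F_1)$ is strictly positive so that the cancellation step is legitimate; this is automatic because any dominating set of a non-empty graph contains at least one vertex. Everything else is bookkeeping of the two chains in Observation \ref{eqchains} and a single appeal to Corollary \ref{necessary}.
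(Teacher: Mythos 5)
Your proof is correct and follows essentially the same route as the paper: both inequalities of (\ref{eq:n3}) are read off from the final lines of (\ref{eq:n1}) and (\ref{eq:n2}) after the equal-values hypothesis collapses the $\max$ and $\min$, the middle inequality is the product of $i(G)\le\beta_0(G)$ and $\gamma_{(\mathcal{A},\mathcal{B})}(F_1)\le\Gamma_{(\mathcal{A},\mathcal{B})}(F_1)$, and the well-dominated case forces the chain to collapse. Your only deviation is invoking Corollary \ref{necessary} to get $\gamma_{(\mathcal{A},\mathcal{B})}(F_1)=\Gamma_{(\mathcal{A},\mathcal{B})}(F_1)$ before cancelling, whereas the paper deduces $i(G)=\beta_0(G)$ directly from the equality of the two middle products and the positivity of all factors; both are valid.
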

\begin{proof}
The middle inequality is a consequence of  $i(G) \leq \beta_0(G)$ and
 $\gamma_{(\mathcal{A}, \mathcal{B})}(G) \leq \Gamma_{(\mathcal{A}, \mathcal{B})}(G)$. 
The left and right inequalities follow immediately by (\ref{eq:n1}) and (\ref{eq:n2}), respectively. 
If $G[\Phi]$ is well $\gamma_{(\mathcal{A}, \mathcal{B})}$-dominated, 
then the inequality chain (\ref{eq:n3}) becomes equality chain 
implying $i(G) = \beta_0(G)$. 
\end{proof}

To formulate our next result,  we need the following domination parameters.
\begin{itemize}
\item [\rm (h)] $\gamma_{(\mathcal{F}, \mathcal{I})} (G)$ and  $\Gamma_{(\mathcal{F}, \mathcal{I})} (G)$ are 
                            the acyclic domination and upper acyclic domination numbers $\gamma_a(G)$ and $\Gamma_a(G)$ (\cite{hhr}), 
\item [\rm (i)]  $\gamma_{(\mathcal{S}_k, \mathcal{I})} (G)$ and  $\Gamma_{(\mathcal{S}_k, \mathcal{I})} (G)$ are 
                            the $k$-dependent domination and upper $k$-dependent domination numbers $\gamma^k(G)$ and $\Gamma^k(G)$ (\cite{fhhr}).  
\end{itemize}

\begin{remark}\label{appl}
Let  the pair $(\mathcal{A}, \mathcal{B})$  be one of 
$(\mathcal{I}, \mathcal{I})$, $(\mathcal{F}, \mathcal{I})$, $(\mathcal{S}_k, \mathcal{I})$,	$(\mathcal{I}, \mathcal{T})$, 
$(\mathcal{T}, \mathcal{I})$, $(\mathcal{T}, \mathcal{T})$ and $(\mathcal{M}, \mathcal{I})$. 
In addition if $(\mathcal{A}, \mathcal{B})$ is one of the last four pairs, let   $\delta(F_i) \geq 1$ for all $i \in [n]$. 
Then the assumptions of Observation \ref{eqchains} are fulfilled. 
Therefore the inequality chains (\ref{eq:n1}) and (\ref{eq:n2}) 
as well as  Corollary \ref{necessary} and Corollary \ref{corchain} are valid
for such a pair $(\mathcal{A}, \mathcal{B})$.
\end{remark}

The next result shows that the left and right inequalities in the chain (\ref{eq:n3}) become 
equalities in the case when $(\mathcal{A}, \mathcal{B}) = (\mathcal{S}_0, \mathcal{I})$ 
or equivalently, when 
$\gamma_{(\mathcal{A}, \mathcal{B})} = i$ 
and $\Gamma_{(\mathcal{A}, \mathcal{B})} = \beta_0$.

\begin{theorem}\label{ind} (\cite{nr} when $G[\Phi]=G[F]$) 
If $i(F_1) = i(F_2) =..= i(F_n)$, then  $i(G[\Phi]) = i(G)i(F_1)$. 
If $\beta_0(F_1) = \beta_0(F_2) =..= \beta_0(F_n)$, then  $\beta_0(G[\Phi]) = \beta_0(G)\beta_0(F_1)$. 
 \end{theorem}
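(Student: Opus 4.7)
The plan is to prove each equality by pairing one inequality from Observation \ref{eqchains} (applied with $(\mathcal{A}, \mathcal{B}) = (\mathcal{S}_0, \mathcal{I})$, so that $\gamma_{(\mathcal{A},\mathcal{B})} = i$ and $\Gamma_{(\mathcal{A},\mathcal{B})} = \beta_0$) with a matching bound obtained by projecting an extremal set of $G[\Phi]$ onto $V(G)$. First I would check the hypotheses of Observation \ref{eqchains}: every graph has an independent dominating set, and for any maximal independent set $\{\textbf{l}_\textbf{1},\ldots,\textbf{l}_\textbf{s}\}$ of $G$ and any choice of maximal independent sets $D_{l_i}$ in $F_{l_i}$, the union $\bigcup_i D_{l_i}$ is a maximal independent set of $G[\Phi]$ (verified directly from the definition of $G[\Phi]$, since inter-layer adjacency is controlled entirely by $E(G)$).

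For $i(G[\Phi]) = i(G)\,i(F_1)$, the upper bound is immediate from the last inequality of chain (\ref{eq:n1}) under the hypothesis $i(F_1) = \cdots = i(F_n)$, since then $\max_j i(F_j) = i(F_1)$. For the lower bound I would pick a minimum maximal independent set $S$ of $G[\Phi]$, set $S_j = S \cap V(F_j)$ and $J = \{j \in [n] : S_j \neq \emptyset\}$, and verify two claims. First, $J^* := \{\textbf{j} : j \in J\}$ is a maximal independent set of $G$: independence holds because any $G$-edge $\textbf{j}\textbf{k}$ with $j,k \in J$ would force an edge between $S_j$ and $S_k$ in $G[\Phi]$, and maximality holds because for $\textbf{k} \notin J^*$ every vertex of $F_k$ must be adjacent to some vertex of $S$, which must sit in a layer $F_j$ with $j \in J$ and $\textbf{j}\textbf{k} \in E(G)$. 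Second, each $S_j$ with $j \in J$ is a maximal independent set of $F_j$: a vertex $y \in V(F_j) \setminus S_j$ is dominated by $S$, and its dominator cannot lie in any $F_k$ with $k \in J \setminus \{j\}$ because independence of $J^*$ forbids $\textbf{j}\textbf{k} \in E(G)$, so the dominator must lie in $S_j$. Combining $|J^*| \geq i(G)$ with $|S_j| \geq i(F_j) = i(F_1)$ for each $j \in J$ then yields $|S| \geq i(G)\,i(F_1)$.

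The second equality is dual. The lower bound $\beta_0(G[\Phi]) \geq \beta_0(G)\,\beta_0(F_1)$ comes from the last inequality of chain (\ref{eq:n2}), since $\min_j \beta_0(F_j) = \beta_0(F_1)$ under the new hypothesis. For the upper bound I would take a maximum independent set $T$ of $G[\Phi]$, set $T_j = T \cap V(F_j)$ and $K = \{j : T_j \neq \emptyset\}$, observe that $\{\textbf{j} : j \in K\}$ is independent in $G$ (so $|K| \leq \beta_0(G)$) and that each $T_j$ is independent in $F_j$ (so $|T_j| \leq \beta_0(F_j) = \beta_0(F_1)$), and sum. The delicate point in the entire argument is the second claim in the lower bound for $i$: that a nonempty projection $S_j$ is actually maximal (not merely independent) in $F_j$. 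This depends crucially on independence of $J^*$ in $G$, which excludes every external layer meeting $S$ from being adjacent to $F_j$ in $G[\Phi]$ and thereby forces domination of $V(F_j) \setminus S_j$ to happen from within $S_j$.
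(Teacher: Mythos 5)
Your proposal is correct and follows essentially the same route as the paper: one inequality in each equality comes from the chains in Observation \ref{eqchains} specialized to $(\mathcal{A},\mathcal{B})=(\mathcal{S}_0,\mathcal{I})$, and the reverse inequality comes from projecting an extremal maximal independent set of $G[\Phi]$ onto $G$ and onto the layers it meets. Your careful verification that each nonempty trace $S_j$ is a \emph{maximal} independent set of $F_j$ is precisely the step the paper's proof dismisses as obvious, so nothing is missing.
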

\begin{proof}  By Remark \ref{appl}  and Corollary \ref{corchain} we know that  
                            $i(G[\Phi]) \leq i(G)i(F_1)$ and $\beta_0(G[\Phi]) \geq  \beta_0(G)\beta_0(F_1)$. 
              				      Let now $I$ be any maximal independent set  of  $G[\Phi]$ and let 
																	$F_{i_1}, F_{i_2},..,F_{i_s}$ be all $F_i$'s each of which has a vertex in $I$. 
																	Choose $u_{i_r} \in V(F_{i_r})$ arbitrarily, $r=1,2,..,s$, and consider any $G$-layer $H$ 
																	containing all vertices of  $U = \{u_{i_1}, u_{i_2},..,u_{i_s}\}$. 
																Clearly $U$ is a maximal independent set of $H \simeq G$;
																hence $\beta_0(G) \geq |U| \geq i(G)$.  
																It remains to note that obviously $I \cap V(F_{i_r})$ is  maximal independent set of $F_{i_r}$, $r \in [s]$. 
																Therefore  $i(G[\Phi]) \geq i(G)i(F_1)$ and $\beta_0(G[\Phi]) \leq  \beta_0(G)\beta_0(F_1)$.  
	\end{proof}

The following characterization of well covered  generalized lexicographic product of graphs
 is due to Topp and Volkmann \cite{tv2}.

\begin{them}\label{wellcovered}\cite{tv2}
The generalized lexicographic product $G[\Phi]$
is a well covered graph if and only if  all $F_i$'s are well covered and 
$\Sigma_{r=1}^s\beta_0(F_{i_r}) = \Sigma_{p=1}^l\beta_0(F_{j_p})$
for every two maximal independent sets 
$\{\textbf{i}_\textbf{1}, \textbf{i}_\textbf{2},..,\textbf{i}_\textbf{s}\}$ 
and 
$\{\textbf{j}_\textbf{1}, \textbf{j}_\textbf{2},..,\textbf{j}_\textbf{l}\}$
 of $G$. 
\end{them}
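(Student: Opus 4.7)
The plan is to characterize the maximal independent sets of $G[\Phi]$ very explicitly and then deduce both directions by inspection. The key structural claim is that the maximal independent sets of $G[\Phi]$ are exactly the sets of the form $U = \bigcup_{r=1}^{s} D_{i_r}$ where $\{\mathbf{i_1}, \ldots, \mathbf{i_s}\}$ is a maximal independent set of $G$ and each $D_{i_r}$ is a maximal independent set of $F_{i_r}$. One direction of this claim (that any such $U$ is maximal independent in $G[\Phi]$) is the content of Observation \ref{eqchains} specialised to $(\mathcal{A},\mathcal{B})=(\mathcal{S}_0,\mathcal{I})$; independence follows because no two chosen indices $\mathbf{i_r}, \mathbf{i_{r'}}$ are adjacent in $G$, while maximality is checked by considering any $v \in V(F_k) \setminus U$: either $k \in \{i_1,\ldots,i_s\}$ and some vertex of $D_k$ dominates $v$ inside $F_k$, or $k \notin \{i_1,\ldots,i_s\}$, in which case the maximality of $\{\mathbf{i_1},\ldots,\mathbf{i_s\}}$ in $G$ supplies an index $i_r$ with $\mathbf{k}\mathbf{i_r} \in E(G)$, and any vertex of $D_{i_r}$ is then adjacent to $v$.

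For the converse containment, suppose $I$ is any maximal independent set of $G[\Phi]$ and let $F_{i_1},\ldots,F_{i_s}$ be the fibers meeting $I$. Then $\{\mathbf{i_1},\ldots,\mathbf{i_s}\}$ is independent in $G$ (otherwise two vertices of $I$ in different fibers would be adjacent) and maximal in $G$: if some $\mathbf{k} \in V(G)$ were non-adjacent to every $\mathbf{i_r}$, then no vertex of $F_k$ would be adjacent to any vertex of $I$, so $I$ could be extended. Likewise each $I \cap V(F_{i_r})$ is maximal independent in $F_{i_r}$, for otherwise we could add a vertex of $F_{i_r}$ to $I$.

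Given this description, the two directions are immediate. For the ``if'' direction, if every $F_i$ is well covered then $|I \cap V(F_{i_r})| = \beta_0(F_{i_r})$ for each $r$, so $|I| = \sum_{r=1}^{s}\beta_0(F_{i_r})$; and the assumed equality of these sums across all maximal independent sets of $G$ makes $|I|$ constant. For the ``only if'' direction, fix $i$ and two maximal independent sets $A,B$ of $F_i$, extend $\{\mathbf{i}\}$ to a maximal independent set $\{\mathbf{i},\mathbf{k_2},\ldots,\mathbf{k_s}\}$ of $G$, pick any maximal independent set $D_{k_j}$ in each $F_{k_j}$, and assemble $A \cup \bigcup_{j}D_{k_j}$ and $B \cup \bigcup_{j}D_{k_j}$; the structural claim shows both are maximal independent in $G[\Phi]$, and well-coveredness forces $|A|=|B|$. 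Having obtained well-coveredness of each $F_i$, the sum condition is then read off by comparing two maximal independent sets of $G[\Phi]$ built from any two maximal independent sets of $G$ together with $\beta_0$-sized independent sets of the corresponding fibers.

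The only step that requires genuine attention is the structural characterisation of maximal independent sets in Step 1, particularly the maximality argument, which relies essentially on the GLP adjacency rule $xy \in E(G[\Phi]) \Leftrightarrow \mathbf{i}\mathbf{j} \in E(G)$ for $x \in V(F_i), y \in V(F_j), i \neq j$; everything after Step 1 is a short bookkeeping argument. There is no combinatorial obstacle beyond being careful that extending $\{\mathbf{i}\}$ to a maximal independent set of $G$ is always possible, which is routine.
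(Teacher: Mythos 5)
Your proof is correct. Note that the paper itself gives no proof of this statement: it is quoted as a known result of Topp and Volkmann \cite{tv2}, so there is no in-paper argument to compare against. Your argument is self-contained and sound: the structural characterization of the maximal independent sets of $G[\Phi]$ as exactly the unions $\bigcup_{r} D_{i_r}$ over a maximal independent set of indices in $G$ is proved correctly in both directions (the adjacency rule of the GLP is used exactly where it is needed), and both implications of the theorem then follow by the bookkeeping you describe. One small remark: your parenthetical appeal to Observation~\ref{eqchains} is not really available to you, since that observation is stated conditionally (it \emph{assumes} that the assembled sets are minimal dominating $(\mathcal{A},\mathcal{B})$-sets rather than proving it); but this does not matter, because you immediately give the direct maximality argument yourself, which is what actually carries the proof.
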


 Next we present a characterization of well $\gamma$-dominated  generalized lexicographic product of graphs.
For a graph $G[\Phi]$ and any minimal dominating set $R$ of $G$
let $I_R = \{\mathbf{i} \mid deg_{\left\langle R \right\rangle}(\mathbf{i}) = 0 \ \mbox{and} \ \gamma(F_i)\geq 2\}$. 
\begin{theorem}\label{neces}
Let $G[\Phi]$ be such that $|V(F_i)| \geq 2$ for all $i \in [n]$. 
 Then $G[\Phi]$ is  a well $\gamma$-dominated graph if and only if
 the following assertions hold. 
\begin{itemize}
\item[(i)]  $F_i$ is well $\gamma$-dominated with $\gamma(F_i) \leq 2$ for all $i \in [n]$. 
\item[(ii)]  there is a number $k$ such that for each minimal dominating set $R$ of $G$, $|R| + |I_R| = k$.  
\end{itemize}
\end{theorem}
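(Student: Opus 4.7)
The strategy is to analyze minimal dominating sets $D$ of $G[\Phi]$ via their supports $R=\{\mathbf{i}:D\cap V(F_i)\ne\emptyset\}$, to establish the counting formula $|D|=|R|+|I_R|$ under condition (i), and then to tie things back to condition (ii).

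For the necessity direction, I would first verify that Observation \ref{eqchains} applies for $(\mathcal{I},\mathcal{I})$: any union $\bigcup_r D_{l_r}$ over a maximal independent set $\{\mathbf{l}_1,\ldots,\mathbf{l}_s\}$ of $G$ and minimal dominating sets $D_{l_r}$ of the corresponding $F_{l_r}$ is a minimal dominating set of $G[\Phi]$, since the independence of the $\mathbf{l}_r$'s preserves private neighbors. Corollary \ref{necessary} then yields each $F_i$ is well $\gamma$-dominated, together with the identity $\gamma(G[\Phi])=\sum_r\gamma(F_{l_r})$ for every maximal independent set of $G$. To derive $\gamma(F_i)\le 2$, I argue by contradiction: assuming $\gamma(F_i)\ge 3$, a lift $D_{R_0}$ from a minimal dominating set $R_0=\{\mathbf{i}\}\cup S$ of $G$ (with $S$ a minimal dominating set of $G-N_G[\mathbf{i}]$, so $\mathbf{i}$ is isolated in $\langle R_0\rangle$) has size at least $\gamma(F_i)+(|R_0|-1)\ge|R_0|+2$, while a $\gamma$-set $D^*$ from Lemma \ref{lexlemaanew}(iii) satisfies $|D^*\cap V(F_j)|\le 1$ for all $j$ with every isolated index in its support of $\gamma$-value $1$, so $\mathbf{i}$ cannot play that role. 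Comparing the forced equality $|D_{R_0}|=|D^*|$ with the Corollary \ref{necessary} identity across varying maximal independent sets yields a contradiction. Condition (ii) then follows: the lift $D_R$ from any minimal dominating set $R$ of $G$ is a minimal dominating set of $G[\Phi]$ of size $|R|+|I_R|$ (using $\gamma(F_j)\le 2$), which well-dominance forces constant.

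For the sufficiency direction, assume (i) and (ii), and let $D$ be any minimal dominating set of $G[\Phi]$ with support $R$. For $\mathbf{j}\in R$ isolated in $\langle R\rangle$, $F_j$ has no external dominator from $D$ and $D\cap V(F_j)$ must be a minimal dominating set of $F_j$, of size $\gamma(F_j)\in\{1,2\}$; for non-isolated $\mathbf{j}\in R$, internal domination of $F_j$ is externally supplied and any two vertices of $D\cap V(F_j)$ would be mutually redundant for external coverage, forcing $|D\cap V(F_j)|=1$. Summation yields $|D|=|R|+|I_R|$. I would then reduce $R$ to a minimal dominating set $R'$ of $G$ by iteratively removing indices $\mathbf{j}\in R$ with $R\setminus\{\mathbf{j}\}$ still dominating in $G$; at each step the minimality of $D$ produces at least one newly-isolated $\mathbf{l}\in R\setminus\{\mathbf{j}\}$ with $\gamma(F_l)=2$ (using the fact that a well $\gamma$-dominated $F_l$ with $\gamma(F_l)=1$ is necessarily complete, so the single vertex of $D\cap V(F_l)$ would dominate $F_l$), giving $|R|+|I_R|\le|R\setminus\{\mathbf{j}\}|+|I_{R\setminus\{\mathbf{j}\}}|$. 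Iterating and invoking (ii) yields $|D|\le k$, and the lift $D_{R'}$ attains $k$, so $\Gamma(G[\Phi])=k$. Establishing $\gamma(G[\Phi])=k$ and hence well-dominance requires the stronger statement that each reduction introduces exactly one such newly-isolated index.

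The principal obstacle is this uniqueness: under (i)-(ii), ruling out reductions that introduce two or more newly-isolated indices of $\gamma$-value $2$. If two such indices $\mathbf{l}_1,\mathbf{l}_2$ existed, each of the single vertices $v_{l_i}\in D\cap V(F_{l_i})$ would require a private external neighbour $\mathbf{m}_i\ne\mathbf{j}$ adjacent to $\mathbf{l}_i$ in $G$ with $\mathbf{l}_i$ being $\mathbf{m}_i$'s unique $R$-neighbour; the resulting structural constraints on $G$, when tested via (ii) against two carefully chosen minimal dominating sets of $G$ (one containing $\mathbf{j}$ and one replacing it with alternative dominators of its neighbourhood), should provide the needed inconsistency.
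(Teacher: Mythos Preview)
Your necessity argument is more roundabout than the paper's and has a gap. To obtain (i), the paper simply takes, for each fixed $j$, a $\Gamma$-set $D_j$ of $F_j$ together with a $\gamma$-set $D_j'$ of $G[\Phi]-N[V(F_j)]$; the union $D_j\cup D_j'$ is easily checked to be a minimal dominating set of $G[\Phi]$, hence a $\gamma$-set by well-dominance, and Lemma~\ref{lexlemaanew}(i) applied to this $\gamma$-set yields $|D_j|\le 2$ with $D_j$ a $\gamma$-set of $F_j$, so $\gamma(F_j)=\Gamma(F_j)\le 2$ in one stroke. Your detour through Corollary~\ref{necessary} does give well-domination of each $F_i$, but your contradiction argument for $\gamma(F_i)\le 2$ is not sound as written: the ``lift'' $D_{R_0}$ from a minimal dominating set $R_0$ that is not independent is never shown to be a \emph{minimal} dominating set of $G[\Phi]$ (the single vertices placed at non-isolated indices of $R_0$ need private neighbours, which you do not supply), and the closing ``yields a contradiction'' is a gesture, not an argument. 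For (ii) both approaches coincide: lift a minimal dominating set $R$ of $G$ to a minimal dominating set of $G[\Phi]$ of size $|R|+|I_R|$.

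For sufficiency the paper takes a much shorter route than you do: given a minimal dominating set $D$ of $G[\Phi]$ with support $R=\{\mathbf{i}:D\cap V(F_i)\ne\emptyset\}$, it asserts that $R$ is already a \emph{minimal} dominating set of $G$, computes $|D|=|R|+|I_R|$, and invokes (ii). Your caution about $R$ possibly failing to be minimal is in fact justified (for instance $G=P_3$, $F_1=F_3=K_2$, $F_2=\overline{K_2}$, and $D$ consisting of one vertex of $F_1$ and one of $F_2$: here $R=\{\mathbf{1},\mathbf{2}\}$ is not minimal in $G$), so your reduction idea is the honest approach. But your execution is, as you concede, incomplete: the one-step inequality $|R|+|I_R|\le|R\setminus\{\mathbf{j}\}|+|I_{R\setminus\{\mathbf{j}\}}|$ gives only $|D|\le k$, and the ``principal obstacle'' of showing exactly one newly isolated index of $\gamma$-value $2$ per removal is left as a sketch. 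Worse, after the first removal the current set is no longer the support of $D$, so appealing to ``minimality of $D$'' at later steps requires an argument you have not provided. In short, the paper's sufficiency proof is terse to the point of hiding a genuine difficulty, while yours identifies that difficulty but does not overcome it.
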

\begin{proof}
$\Rightarrow$  
Let $D_j$ be a $\Gamma$-set of $F_j$ and $D_j^\prime$ a $\gamma$-set of $G- N[V(F_j)]$ for some $j \in [n]$.  
Then  $D_j \cup D_j^\prime$ is a minimal dominating set of $G[\Phi]$. 
Since $G[\Phi]$ is well dominated,  $D_j \cup D_j^\prime$ is a $\gamma$-set of $G[\Phi]$. 
Now using Lemma \ref{lexlemaanew}(i), we obtain that $|D_j| \leq 2$ and $D_j$ is a $\gamma$-set of $F_j$. 
Thus, (i) is satisfied. 

Let $R= \{\textbf{i}_\textbf{1}, \textbf{i}_\textbf{2},..,\textbf{i}_\textbf{s}\}$ be 
an arbitrary minimal dominating set of $G$ and let 
$U = \{u_1, u_2,.., u_s\}$ be a $G$-layer of $G[\Phi]$ such that 
$u_i$ belongs to some minimal dominating set of $F_i$, $i=1,2,..,n$.  
 Since $\left\langle U \right\rangle \simeq G$, 
$R_1 = \{u_{i_1}, u_{i_2},..,u_{i_s}\}$ is a  minimal dominating set of $U$. 
If $R_1$ is a dominating set of $G[\Phi]$, then clearly $R_1$ is a minimal dominating set of $G[\Phi]$; 
hence $I_R= \emptyset$ and $|R| + |I_R| = |R_1|$. Since  $G[\Phi]$ is well $\gamma$-dominated, 
$|R| + |I_R| = \gamma(G[\Phi]) = \Gamma(G([\Phi])$.
 Now let $I_R = \{\textbf{j}_\textbf{1}, \textbf{j}_\textbf{2},..,\textbf{j}_\textbf{l}\}$. 
Since $\gamma(F_{j_r}) = \Gamma(F_{j_r}) = 2$, for all $r \in [l]$ (by (i)), 
 there is $v_{j_r} \in V(F_{j_r})$ such that $\{u_{j_r} , v_{j_r} \}$ is a 
$\Gamma$-set of $F_{j_r}$, $r=1,2,..,l$. But then 
$R_1 \cup \{v_{j_1}, v_{j_2},..,v_{j_l}\}$ is a  minimal dominating set of $G[\Phi]$ 
and as $G[\Phi]$ is $\gamma$-well dominated, $\gamma(G([\Phi]) = \Gamma(G([\Phi]) = |R_1| + l = |R| + |I_R|$. 

$\Leftarrow$ 
   Let  $D$ be an arbitrary  minimal dominating set of $G[\Phi]$ and $F_{i_1},F_{i_2},..,F_{i_s}$ be all $F_i$'s 
   each of which  has an element in common with $D$.  Clearly 
   $R = \{\textbf{i}_\textbf{1}, \textbf{i}_\textbf{2},..,\textbf{i}_\textbf{s}\}$ 
   is a minimal dominating set of $G$. 
	Assume $D$ and $F_{i_r}$ have more than one element in common. 
	By (i), there are exactly two vertices belonging to both  $D$ and $F_{i_r}$. 
	But then for each $\textbf{j} \in N(\textbf{i}_\textbf{r})$,  $D \cap F_j$ is empty. 
	Therefore $\textbf{i}_\textbf{r} \in I_R$, which implies   $|D| = |R| + |I_R|$. 
Now by (ii), $|D| = k$ and since $D$ was chossen arbitrarily, $G[\Phi]$ is well $\gamma$-dominated.
 \end{proof}

By the proof of the above theorem we obtain the next result. 

\begin{corollary}\label{gPhi}
If $G[\Phi]$ is  well dominated  and $|V(F_i)| \geq 2$ for all $i \in [n]$, then 
for each minimal dominating set $R$ of $G$, $|R| + |I_R| = \gamma(G[\Phi])$.
\end{corollary}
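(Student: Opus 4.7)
The plan is to extract the construction already latent in the forward direction of the proof of Theorem \ref{neces} and package it as a standalone statement. Since $G[\Phi]$ is well $\gamma$-dominated, Theorem \ref{neces} tells us that every $F_i$ is well $\gamma$-dominated with $\gamma(F_i) \leq 2$. I would begin by fixing an arbitrary minimal dominating set $R = \{\textbf{i}_\textbf{1}, \textbf{i}_\textbf{2},\dots,\textbf{i}_\textbf{s}\}$ of $G$ and building a $G$-layer $U = \{u_1,\dots,u_n\}$ in which each $u_i$ is chosen to lie in some minimal dominating set of $F_i$. Then $R_1 = \{u_{i_1},\dots,u_{i_s}\}$ is a minimal dominating set of $\langle U\rangle \simeq G$.

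Next, for each $\textbf{j}_\textbf{r} \in I_R$ (so $\textbf{j}_\textbf{r}$ is isolated in $\langle R\rangle$ and $\gamma(F_{j_r})=2$), well-domination of $F_{j_r}$ gives $\Gamma(F_{j_r})=2$, and we may choose $v_{j_r} \in V(F_{j_r})$ so that $\{u_{j_r},v_{j_r}\}$ is a $\Gamma$-set (hence minimal dominating set) of $F_{j_r}$. Form
\[
D \;=\; R_1 \cup \{v_{j_r}: \textbf{j}_\textbf{r} \in I_R\}.
\]
The set $D$ dominates $G[\Phi]$: vertices of each $F_{i_r}$ with $\textbf{i}_\textbf{r}\not\in I_R$ have a neighbour in $R_1$ via the induced $G$-structure on $U$, and vertices of each $F_{j_r}$ with $\textbf{j}_\textbf{r}\in I_R$ are dominated internally by $\{u_{j_r},v_{j_r}\}$.

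I would then verify minimality: removing $u_{i_r}$ with $\textbf{i}_\textbf{r}\not\in I_R$ leaves $F_{i_r}$ or some neighbouring $F_j$ undominated exactly as for $R$ in $G$; removing $u_{j_r}$ or $v_{j_r}$ for $\textbf{j}_\textbf{r}\in I_R$ leaves a vertex of $F_{j_r}$ undominated, because $D$ meets no $F_k$ with $\textbf{k}$ adjacent to $\textbf{j}_\textbf{r}$ (that is precisely the condition defining $I_R$) and $\{u_{j_r},v_{j_r}\}$ is a minimal dominating set of $F_{j_r}$. Therefore $D$ is a minimal dominating set of $G[\Phi]$ of cardinality $|R|+|I_R|$. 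Since $G[\Phi]$ is well $\gamma$-dominated, $|D|=\gamma(G[\Phi])$, yielding the claim.

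There is essentially no obstacle here, as the argument is already present inside the proof of Theorem \ref{neces}; the only care needed is to isolate the construction from the double implication and confirm that minimality of $D$ depends on $R$ being minimal in $G$, together with $F_i$ being well $\gamma$-dominated with $\gamma(F_i)\leq 2$, both of which are guaranteed by the hypotheses via Theorem \ref{neces}.
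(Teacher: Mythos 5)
Your proposal is correct and follows exactly the paper's route: the paper proves this corollary simply by pointing to the forward direction of the proof of Theorem \ref{neces}, which constructs precisely the minimal dominating set $R_1 \cup \{v_{j_1},\dots,v_{j_l}\}$ of cardinality $|R|+|I_R|$ and invokes well-domination. Your write-up just makes that extraction explicit (including the minimality check, which the paper leaves implicit), so there is nothing to object to.
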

 
 \begin{theorem}\label{wd2} (\cite{GHM1}  when $G[\Phi]=G[F]$) 
Let $G[\Phi]$ be such that $|V(F_i)| \geq 2$ for all $i \in [n]$, 
 $\gamma(F_1) = \gamma(F_2) =..=\gamma(F_n)$ and $\Gamma(F_1) = \Gamma(F_2) =..=\Gamma(F_n)$. 
Then  $G[\Phi]$ is well $\gamma$-dominated if and only if one of the following conditions holds:
\begin{itemize}
\item[(i)] $G$ is well-dominated and  all $F_i$'s are complete, or
\item[(ii)] $G$ is complete and $F_i$ is well $\gamma$-dominated with $\gamma(F_i) = 2$ for all $i \in [n]$. 
\end{itemize}
\end{theorem}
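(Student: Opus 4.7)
My plan is to invoke Theorem~\ref{neces} as the starting point. That theorem says $G[\Phi]$ is well $\gamma$-dominated if and only if each $F_i$ is well $\gamma$-dominated with $\gamma(F_i) \le 2$ and $|R| + |I_R|$ is constant over the minimal dominating sets $R$ of $G$. Under our homogeneity hypotheses, the common value of $\gamma(F_i) = \Gamma(F_i)$ is forced to be either $1$ or $2$, and I expect these two alternatives to correspond exactly to conditions (i) and (ii). The sufficiency direction is a direct check: in case (i) the completeness of each $F_i$ gives $\gamma(F_i) = 1$, hence $I_R$ is empty and the constant-sum condition collapses to $|R|$ being constant, i.e., $G$ is well-dominated; in case (ii) the only minimal dominating sets of $K_n$ are the singletons $\{\mathbf{v}\}$, for which $|R| + |I_R| = 1 + 1 = 2$ regardless of $\mathbf{v}$.

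For necessity, Theorem~\ref{neces} already yields that each $F_i$ is well $\gamma$-dominated with $\gamma(F_i) \le 2$. If $\gamma(F_i) = 1$ for all $i$, then $\gamma(F_i) = \Gamma(F_i) = 1$ forces every vertex of $F_i$ to be universal, so $F_i$ is complete; the constant-sum condition then reduces to well-domination of $G$ as above, yielding case (i). Suppose instead that $\gamma(F_i) = 2$ for all $i$; the task is to prove $G = K_n$. Theorem~\ref{gamma=2} (applicable since $\gamma(F_i) \ge 2$) combined with Theorem~\ref{t=t=} gives $\gamma(G[\Phi]) = \gamma_t(G[\Phi]) = \gamma_t(G)$. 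On the upper side, Observation~\ref{eqchains} applied to a $\beta_0$-set $I$ of $G$ together with $\Gamma$-sets of each $F_i$ with $\mathbf{i} \in I$ produces a minimal dominating set of $G[\Phi]$ of size $2\beta_0(G)$, giving $\Gamma(G[\Phi]) \ge 2\beta_0(G)$. Since $G[\Phi]$ is well $\gamma$-dominated, $\gamma_t(G) = \gamma(G[\Phi]) = \Gamma(G[\Phi]) \ge 2\beta_0(G)$, and combining with the trivial $\gamma_t \le 2\gamma \le 2\beta_0$ I obtain $\gamma_t(G) = 2\beta_0(G)$.

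It therefore suffices to prove the self-contained claim that if $G$ is connected with $n \ge 2$ and $\gamma_t(G) = 2\beta_0(G) =: 2k$, then $G = K_n$. The key lemma I plan to establish is a unique-neighbor property: every maximum independent set $I$ of $G$ has the feature that each $w \in V(G) - I$ is adjacent to exactly one vertex of $I$. I would prove it by contradiction: if $w$ were adjacent to distinct $v_1, v_2 \in I$, I would form $T = I \cup \{w\} \cup \{n(v) : v \in I - \{v_1, v_2\}\}$, where $n(v)$ is an arbitrary neighbor of $v$ in $V - I$ (which exists by connectedness). Then $T$ is a total dominating set of size at most $2k - 1$, because the single vertex $w$ simultaneously witnesses the total-domination requirement for both $v_1$ and $v_2$, contradicting $\gamma_t(G) = 2k$.

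Granting this lemma, assume for contradiction that $k \ge 2$, pick $v_1 \in I$ and any $w \in N(v_1)$, and form $I'' := (I - \{v_1\}) \cup \{w\}$; this set is independent because the lemma gives $w \not\sim I - \{v_1\}$, and it is maximum since $|I''| = k$. Applying the lemma to $I''$ forces, for every $v_2 \in I - \{v_1\}$ and every $u \in N(v_2) \cap (V - I)$, that the unique neighbor of $u$ in $I''$ is $v_2$, hence $u \not\sim w$. Since $w$ was arbitrary in $N(v_1)$, there are no edges between $N(v_1)$ and $N(v_2)$; since additionally each $v \in I$ has $N(v) \subseteq V - I$ (as $I$ is independent) and $v_1 \not\sim v_2$, the sets $\{v\} \cup N(v)$ for $v \in I$ are the connected components of $G$, contradicting connectedness whenever $k \ge 2$. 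The hardest step is the key lemma; the rest is a straightforward bookkeeping swap argument.
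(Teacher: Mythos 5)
Your proposal is correct, and for the crux of the argument it takes a genuinely different route from the paper. The paper does not pass through Theorem~\ref{neces} at all: it handles sufficiency directly (in case (i) every minimal dominating set of $G[\Phi]$ lies in a $G$-layer because the $F_i$'s are complete; in case (ii) one just notes $\gamma=\Gamma=2$), and for necessity it uses Corollaries~\ref{necessary} and~\ref{corchain} plus Lemma~\ref{lexlemaanew}(i) to get $\gamma(F_1)\le 2$, concluding in the $\gamma(F_1)=2$ case that every $i$-set of $G$ is efficient dominating and asserting --- without further detail --- that this forces $G$ complete. Your version instead funnels both directions through the criterion of Theorem~\ref{neces} (which cleanly disposes of sufficiency and of the $\gamma(F_i)=1$ case via $I_R=\emptyset$, resp.\ $|I_R|=|R|=1$ for the singleton minimal dominating sets of $K_n$), and in the $\gamma(F_i)=2$ case replaces the efficient-domination argument by the numerical identity $\gamma_t(G)=\gamma_t(G[\Phi])=\gamma(G[\Phi])=\Gamma(G[\Phi])\ge 2\beta_0(G)$ together with $\gamma_t\le 2\gamma\le 2\beta_0$, reducing everything to the self-contained claim that a connected graph with $\gamma_t(G)=2\beta_0(G)$ is complete. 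I checked your two-step proof of that claim (the unique-neighbor property of maximum independent sets via the total dominating set $I\cup\{w\}\cup\{n(v):v\in I-\{v_1,v_2\}\}$ of size $2k-1$, and then the swap $I''=(I-\{v_1\})\cup\{w\}$ showing the sets $\{v\}\cup N(v)$, $v\in I$, partition $V(G)$ with no edges between parts) and it is sound; the only points worth making explicit in a write-up are that $u\ne w$ for $u\in N(v_2)$ (which follows since $w\not\sim v_2$) so that the lemma really applies to $u$ relative to $I''$, and that $N(v_1)\cap N(v_2)=\emptyset$ already follows from the unique-neighbor property of $I$. What your approach buys is a fully detailed justification of the step the paper leaves terse, plus a standalone extremal fact about $\gamma_t$ versus $\beta_0$; what it costs is reliance on more machinery (Theorems~\ref{t=t=}, \ref{gamma=2}, \ref{neces} and Observation~\ref{eqchains}) where the paper works more directly with the structure of $\gamma$-sets of $G[\Phi]$.
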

\begin{proof}
$\Rightarrow$ 
Assume first that $G[\Phi]$ is well $\gamma$-dominated. 
Using Remark \ref{appl} by Corollary \ref{necessary} and  Corollary \ref{corchain} we have
$i(G)= \beta_0(G)$ and $\Gamma(F_1) = \gamma(F_1)$. 
Let $I = \{\textbf{l}_\textbf{1}, \textbf{l}_\textbf{2},..,\textbf{l}_\textbf{s}\}$
be an arbitrary  $i$-set of $G$ and  $D_j$  an  arbitrary  $\gamma$-set  of $F_j$, $j=1,2,..,n$. 
Then clearly $D = \cup_{r=1}^s D_{l_r}$ is a $\gamma$-set of $G[\Phi]$.
Now by  Lemma \ref{lexlemaanew}(i) it follows that  $\gamma(F_1) \leq 2$. 
If  $\gamma(F_1) = 2$, then by Lemma \ref{lexlemaanew}(i) it follows that  
 each $i$-set of $G$ is efficient dominating. 
	This fact allow us to conclude that a graph $G$ is complete. 
											
	So, let $\gamma(F_1) = 1$. 	We already know that $\Gamma(F_1) = \gamma(F_1)$. 
                       Hence  all $\left\langle F_i \right\rangle$'s are complete.
										    Let $U$ be a $G$-layer of $G[\Phi]$, $R_1$ a $\gamma$-set of $U$ and 
												$R_2$ a $\Gamma$-set of $U$. Since all $\left\langle F_i \right\rangle$'s are complete, 
												both $R_1$ and $R_2$ are minimal dominating sets of  $G[\Phi]$ and since
												$G[\Phi]$ is well dominated,  
												$R_1$ and $R_2$ have the same cardinality. Thus, $U$ is well $\gamma$-dominated. 
												It remains to note that $G \simeq U$.
 
$\Leftarrow$ If (ii) is valid, then obviously $\gamma(G[\Phi]) = \Gamma(G[\Phi]) = 2$.
                           So,  suppose (i) is true and let $T_1$ and $T_2$ be different minimal dominating sets 
                            of    $G[\Phi]$.     Since all $\left\langle F_i \right\rangle$'s are complete, 
														there are two $G$-layers, say $U_1$ and $U_2$, which contain 		$T_1$ and $T_2$, 
														respectively.  
															Clearly $T_i$ is a minimal dominating set of $\left\langle U_i \right\rangle \simeq G$, $i=1,2$.
                            Since $G$ is well covered, $|T_1| = |T_2|$ and we are done. 
	\end{proof}

 A characterization of well $\gamma_t$-dominated  generalized lexicographic product of graphs follows.

\begin{theorem}\label{welltotal}
Given a graph  $G[\Phi]$ with $\delta(F_i) \geq 1$ for all $i \in [n]$. 
Then $G[\Phi]$ is  a well $\gamma_t$-dominated graph if and only if 
$G$ is complete and  for all $i \in [n]$, $F_i$ is well $\gamma_t$-dominated with $\gamma_t(F_i)=2$. 
Moreover, if  $G[\Phi]$ is  a well $\gamma_t$-dominated, then $\gamma_t(G[\Phi])=2$. 
\end{theorem}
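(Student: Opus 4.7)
The plan is to prove the two directions separately, with essentially all the work concentrated in the forward implication.

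For the $(\Leftarrow)$ direction, I would start from the observation that when $G$ is complete, any $x \in V(F_i)$ and $y \in V(F_j)$ with $i\neq j$ are adjacent in $G[\Phi]$, and every other vertex of $G[\Phi]$ is adjacent to both $x$ and $y$. Hence $\{x,y\}$ is a total dominating set, giving $\gamma_t(G[\Phi]) = 2$. To show $\Gamma_t(G[\Phi]) = 2$, I would pick an arbitrary minimal total dominating set $T$ of $G[\Phi]$ and split into two cases. If $T$ meets two different $F_i$'s, the same trick supplies a $2$-element total dominating subset of $T$, and minimality forces $|T|=2$. Otherwise $T\subseteq V(F_i)$, and completeness of $G$ makes every vertex outside $V(F_i)$ adjacent to every vertex of $T$, so minimality of $T$ in $G[\Phi]$ reduces to minimality of $T$ in $F_i$; the assumption that $F_i$ is well $\gamma_t$-dominated with $\gamma_t(F_i)=2$ then forces $|T|=2$.

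For the $(\Rightarrow)$ direction, I would first invoke Corollary \ref{necessary} (valid by Remark \ref{appl} since $\delta(F_i)\geq 1$) to conclude that each $F_i$ is well $\gamma_t$-dominated and that
\[
c \;:=\; \gamma_t(G[\Phi]) \;=\; \sum_{r=1}^{s}\gamma_t(F_{i_r})
\]
for every maximal independent set $\{\mathbf{i}_1,\ldots,\mathbf{i}_s\}$ of $G$. By Theorem \ref{t=t=}, $c=\gamma_t(G)$, and since $\delta(F_i)\geq 1$ gives $\gamma_t(F_i)\geq 2$, evaluating the sum on a maximum-size maximal independent set yields $c\geq 2\beta_0(G)$. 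The standard upper bound $\gamma_t(G)\leq 2\beta_0(G)$, obtained by enlarging a maximum independent set $I$ with one neighbour of each of its vertices, forces the equality $\gamma_t(G)=2\beta_0(G)$.

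The crux, and the step I expect to require the most care, is the structural claim that a connected graph $G$ with $\delta(G)\geq 1$ satisfying $\gamma_t(G)=2\beta_0(G)$ must be complete. I plan to derive it from the equality case of the construction above: for every maximum independent set $I$, each vertex of $V(G)\setminus I$ must have exactly one neighbour in $I$, since otherwise one could pick the neighbour-assignment $f$ non-injectively and strictly beat the bound. A swap argument, replacing some $v\in I$ by a vertex $u\in N(v)\setminus I$ to obtain another maximum independent set $I_u$ and reapplying the same property, shows that the sets $\{v\}\cup(N(v)\setminus I)$ are pairwise disjoint cliques with no edges between them, so connectedness of $G$ collapses $|I|$ to $1$ and gives $G=K_n$. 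Once $G$ is complete, $\beta_0(G)=1$ and $c=\gamma_t(K_n)=2$; substituting a singleton maximal independent set $\{\mathbf{i}\}$ into the sum formula then yields $\gamma_t(F_i)=c=2$ for every $i$, and the moreover clause is immediate from $c=2$.
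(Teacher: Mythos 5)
Your proof is correct, but the forward direction follows a genuinely different route from the paper's. The paper takes an arbitrary maximal independent set $I$ of $G$, forms the minimal total dominating set $D=\cup_r D_{l_r}$ from $\Gamma_t$-sets of the corresponding layers (via Observation \ref{eqchains} and Remark \ref{appl}), notes that well $\gamma_t$-dominatedness makes $D$ a $\gamma_t$-set of $G[\Phi]$, and then applies Lemma \ref{lexlemaanew}(i) to get both conclusions at once: each $|D\cap V(F_{l_r})|=2$, whence $\gamma_t(F_{l_r})=\Gamma_t(F_{l_r})=2$, and $I$ is an efficient dominating set of $G$; since in a connected non-complete graph some maximal independent set contains two vertices with a common neighbour (which is then dominated twice), $G$ must be complete. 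You instead extract only the numerical identity $\gamma_t(G)=\gamma_t(G[\Phi])=\sum_r\gamma_t(F_{i_r})\ge 2\beta_0(G)$ from Corollary \ref{necessary} and Theorem \ref{t=t=}, combine it with the elementary bound $\gamma_t(G)\le 2\beta_0(G)$, and then prove the standalone extremal fact that a connected graph with $\gamma_t(G)=2\beta_0(G)$ is complete. That fact is true and your sketch of it is sound: the equality case forces every maximum independent set $I$ to be a perfect code (otherwise a non-injective neighbour-assignment beats the bound), the swap $I\mapsto (I\setminus\{v\})\cup\{u\}$ preserves maximum independence precisely because of the perfect-code property, and reapplying that property to the swapped sets shows the closed neighbourhoods $N[v]$, $v\in I$, are cliques partitioning $V(G)$ with no edges between them, so connectedness gives $|I|=1$. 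The paper's route is shorter because Lemma \ref{lexlemaanew}(i) already encodes the needed structure of $\gamma_t$-sets of $G[\Phi]$; yours buys an argument independent of that lemma and isolates a clean extremal characterization of $\gamma_t=2\beta_0$ that may be of separate interest. Your backward direction is the same as the paper's (which merely declares it obvious), just written out with the case split between $T$ meeting two layers and $T\subseteq V(F_i)$.
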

\begin{proof}
$\Rightarrow$ 
Let $I = \{\textbf{l}_\textbf{1}, \textbf{l}_\textbf{2},..,\textbf{l}_\textbf{s}\}$
be an arbitrary  maximal independent set of $G$ and  $D_j$  an  arbitrary  $\Gamma_t$-set  of $F_j$, $j=1,2,..,n$. 
Then clearly $D = \cup_{r=1}^s D_{l_r}$ is a $\gamma_t$-set of $G[\Phi]$.
 Lemma \ref{lexlemaanew}(i) now implies that $I$  is an efficient dominating set of $G$ and 
  $D_{l_r}$ is a $\gamma_t$-set of $F_{l_r}$ with $\gamma_t(F_{l_r}) =2$ for all $r \in [s]$.
Since $I$ was chosen arbitrarily and each vertex of $G$ belongs to some  maximal independent set of $G$, 
	we can conclude that (a) all $F_i$'s are well $\gamma_t$-dominated graphs with $\gamma_t(F_i) =2$, and 
	(b) all  maximal independent sets of $G$ are efficient dominating. 
	The latter means that a graph $G$ is complete. 
		Finally, by Theorem \ref{t=t=}, $\gamma_t(G[\Phi]) = \gamma_t(G) = 2$.

$\Leftarrow$ Obviously each minimal  total dominating set of $G[\Phi]$ has cardinality $2$. 
 \end{proof}

Now we need  the following obvious but useful observation.

\begin{observation}\label{delta=0}
Given a graph  $G[\Phi]$ with $\delta(F_i) = 0$  and $|V(F_i)| \geq 2$ for all $i \in [n]$. 
Then a set $T$ is a minimal total dominating set of $G[\Phi]$ 
if and only if $T$  is a minimal total dominating set of some $G$-layer  of $G[\Phi]$. 
In particular, $\Gamma_t(G) = \Gamma_t(G[\Phi])$.
\end{observation}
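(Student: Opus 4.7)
The plan is to exploit the isolated vertices guaranteed by $\delta(F_i)=0$ to force each minimal total dominating set of $G[\Phi]$ to meet every layer $V(F_i)$ in at most one vertex, and hence to lie inside some $G$-layer. For each $i \in [n]$ fix an isolated vertex $w_i \in V(F_i)$; because $w_i$ has no neighbor inside $F_i$, its entire neighborhood in $G[\Phi]$ lies in $\bigcup_{\mathbf{ij}\in E(G)} V(F_j)$.

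For the direction $(\Rightarrow)$, let $T$ be a minimal total dominating set of $G[\Phi]$. The first step is to show $|T\cap V(F_i)|\le 1$ for every $i$. If two distinct vertices $x_1,x_2\in T\cap V(F_i)$ existed, then since $w_i$ must be totally dominated there would be some $z\in T$ lying in a layer $V(F_j)$ with $\mathbf{ij}\in E(G)$. I would then claim $T\setminus\{x_2\}$ still totally dominates $G[\Phi]$: every vertex of $V(F_i)$ (including $x_1$) is adjacent to $z$; every vertex of $V(F_k)$ with $\mathbf{ik}\in E(G)$ is adjacent to the surviving $x_1\in T\setminus\{x_2\}$; and every vertex of a non-adjacent layer was never dominated by $x_2$ in the first place. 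This contradicts minimality. Next, I pick the unique element of $T\cap V(F_i)$ when nonempty, and any vertex of $V(F_i)$ otherwise, to assemble a $G$-layer $U\supseteq T$; for each $u_i\in U$, the fact that $T$ totally dominates $u_i$ in $G[\Phi]$ forces the required neighbor to lie in $T\cap V(F_k)\subseteq\{u_k\}$ for some $k$ with $\mathbf{ik}\in E(G)$, so $T$ totally dominates $\langle U\rangle$. Minimality inside $\langle U\rangle$ transfers from minimality inside $G[\Phi]$, because any $T'\subsetneq T$ totally dominating $\langle U\rangle$ would, by the layer argument applied in reverse, also totally dominate $G[\Phi]$.

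For $(\Leftarrow)$, let $T$ be a minimal total dominating set of some $G$-layer $\langle U\rangle$. For any $v\in V(F_i)$, the representative $u_i\in U$ has a neighbor $u_j\in T$ with $\mathbf{ij}\in E(G)$, and the same $u_j$ dominates $v$ in $G[\Phi]$; so $T$ totally dominates $G[\Phi]$. If a proper subset $T'\subsetneq T$ totally dominated $G[\Phi]$, then since $T'\subseteq U$ every neighbor of a vertex of $U$ inside $T'$ already lies in $U$, which makes $T'$ a total dominating set of $\langle U\rangle$ and contradicts minimality of $T$ there. The statement $\Gamma_t(G)=\Gamma_t(G[\Phi])$ is then immediate: the biconditional identifies the minimal total dominating sets of $G[\Phi]$ with those of the $G$-layers, each of which is isomorphic to $G$, so the maximum cardinalities coincide.

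The main obstacle is the case analysis in the contraction step $|T\cap V(F_i)|\le 1$, where one must verify that removing $x_2$ preserves total domination across all three regimes of layers; this is also the unique point where the hypothesis $\delta(F_i)=0$ is used in an essential way, via the witness $z$ produced by the isolated vertex $w_i$.
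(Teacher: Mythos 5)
Your proof is correct, and it uses the hypothesis $\delta(F_i)=0$ in exactly the right place: the isolated vertex $w_i$ forces $T$ to meet a layer adjacent to $V(F_i)$, which is what lets you delete a second vertex of $T\cap V(F_i)$ and conclude $|T\cap V(F_i)|\le 1$, after which the transfer of (minimal) total domination between $G[\Phi]$ and an enclosing $G$-layer is routine. The paper states this observation without proof (calling it ``obvious''), so there is no argument to compare against; your write-up supplies the missing details correctly, including the minimality transfers in both directions.
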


\begin{theorem}\label{welltotal2}
Given a graph  $G[\Phi]$ with $\delta(F_i) = 0$ and $|V(F_i)| \geq 2$ for all $i \in [n]$. 
Then $G[\Phi]$ is  a well $\gamma_t$-dominated graph if and only if 
$G$ is well $\gamma_t$-total dominated. 
\end{theorem}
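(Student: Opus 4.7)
The plan is to prove Theorem \ref{welltotal2} as an almost immediate corollary by combining two previously established results: Theorem \ref{t=t=}, which gives $\gamma_t(G[\Phi]) = \gamma_t(G)$ without any hypothesis on the $F_i$'s, and Observation \ref{delta=0}, which asserts that under the present hypotheses $\delta(F_i)=0$ and $|V(F_i)|\ge 2$ we also get $\Gamma_t(G[\Phi]) = \Gamma_t(G)$. Since both $\gamma_t$ and $\Gamma_t$ of the product coincide with those of the base graph $G$, the equality $\gamma_t(G[\Phi]) = \Gamma_t(G[\Phi])$ is logically equivalent to $\gamma_t(G)=\Gamma_t(G)$. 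That equivalence is exactly the claim.

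Concretely, first I would verify that both $\gamma_t$ and $\Gamma_t$ are defined in the relevant contexts: because $G$ is assumed connected of order at least two, $G$ has no isolated vertices and hence admits a total dominating set; and because some $G$-layer of $G[\Phi]$ is isomorphic to $G$, any total dominating set of such a layer remains totally dominating in $G[\Phi]$, so $\gamma_t(G[\Phi])$ is also well defined. Next, for the forward direction, I would suppose $G[\Phi]$ is well $\gamma_t$-dominated and write
\[
\gamma_t(G)\;=\;\gamma_t(G[\Phi])\;=\;\Gamma_t(G[\Phi])\;=\;\Gamma_t(G),
\]
where the outer equalities invoke Theorem \ref{t=t=} and Observation \ref{delta=0} respectively, and the middle equality is the well-dominatedness hypothesis. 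For the reverse direction, I would run the same three-step chain beginning from $\gamma_t(G) = \Gamma_t(G)$ to conclude $\gamma_t(G[\Phi]) = \Gamma_t(G[\Phi])$.

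Since the heavy lifting is done by Theorem \ref{t=t=} and Observation \ref{delta=0}, there is no genuine obstacle in the argument; the only small point to be careful about is that Observation \ref{delta=0} genuinely requires both $\delta(F_i)=0$ (so that the only minimal total dominating sets live inside single $G$-layers) and $|V(F_i)|\ge 2$ (so that $F_i$ actually contains an isolated vertex that forces a totally dominating set to contain a neighbor of it from $G$). Both hypotheses are present in the theorem statement, so the application is direct and the proof reduces to writing the four-term equality chain above.
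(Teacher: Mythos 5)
Your proof is correct and is essentially identical to the paper's: both arguments combine Theorem \ref{t=t=} (which gives $\gamma_t(G[\Phi])=\gamma_t(G)$) with Observation \ref{delta=0} (which gives $\Gamma_t(G[\Phi])=\Gamma_t(G)$) and then read off the equivalence $\gamma_t(G[\Phi])=\Gamma_t(G[\Phi]) \Leftrightarrow \gamma_t(G)=\Gamma_t(G)$. No further comment is needed.
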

\begin{proof}
By Theorem \ref{t=t=} we have $\gamma_t(G[\Phi]) = \gamma_t(G)$, and 
by Observation \ref{delta=0} - $\Gamma_t(G) = \Gamma_t(G[\Phi])$.
Therefore $\gamma_t(G[\Phi]) = \Gamma_t(G[\Phi])$ if and only if $\gamma_t(G) = \Gamma_t(G)$.	
\end{proof}

In \cite{GHM1}  G\"{o}z\"{u}pek,  Hujdurovi\'c  and   Milani\v{c} posed the following problem.

\begin{prob}\label{m} \cite{GHM1} 
Characterize the nontrivial lexicographic product graphs that are well $\gamma_t$-dominated.
\end{prob}

The previous two theorems together give us the following characterization result.

 \begin{theorem}\label{wtd}
Let $G[F]$ be such that $|V(G)|, |V(F)| \geq 2$ and $G$ connected.  
Then  $G[F]$ is well-$\gamma_t$-dominated if and only if one of the following conditions holds:
\begin{itemize}
\item[(i)] $G$ is complete and  $F$ is well $\gamma_t$-dominated with $\gamma_t(F)=2$.
\item[(ii)] $G$ is well $\gamma_t$-dominated and $\delta(F) = 0$. 
\end{itemize}
\end{theorem}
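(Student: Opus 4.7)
The plan is to obtain Theorem \ref{wtd} as an immediate corollary of Theorems \ref{welltotal} and \ref{welltotal2}, by observing that the ordinary lexicographic product $G[F]$ is exactly the special case of the generalized lexicographic product $G[\Phi]$ in which $\Phi = (F, F, \ldots, F)$. The entire argument amounts to splitting on whether $F$ has an isolated vertex, and both subcases are already handled by the preceding two theorems.

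For the forward direction, I would assume $G[F]$ is well $\gamma_t$-dominated. Since $|V(F)| \geq 2$, exactly one of $\delta(F) \geq 1$ or $\delta(F) = 0$ holds. In the first subcase, Theorem \ref{welltotal} applied with every $F_i$ equal to $F$ yields that $G$ is complete and $F$ is well $\gamma_t$-dominated with $\gamma_t(F) = 2$, which is condition (i). In the second subcase, Theorem \ref{welltotal2} (again with every $F_i = F$) yields that $G$ is well $\gamma_t$-dominated, which is condition (ii).

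For the converse, if (i) holds, the equality $\gamma_t(F) = 2$ implicitly forces $\delta(F) \geq 1$, since $\gamma_t$ is only defined on graphs without isolated vertices; Theorem \ref{welltotal} then directly gives that $G[F]$ is well $\gamma_t$-dominated. If instead (ii) holds, then $\delta(F) = 0$ together with the well $\gamma_t$-domination of $G$ places us squarely in the hypotheses of Theorem \ref{welltotal2}, whose conclusion is exactly what is needed.

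The only point requiring care is verifying that the two subcases are exhaustive and that the technical hypotheses of Theorems \ref{welltotal} and \ref{welltotal2} are met when restricted to $\Phi = (F, \ldots, F)$; both checks are trivial. In effect there is no genuine obstacle: the work has already been done in the generalized setting, and Theorem \ref{wtd} is merely the standard-lexicographic-product shadow of that work, packaged so as to resolve Problem \ref{m}.
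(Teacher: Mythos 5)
Your proposal is correct and matches the paper's own (implicit) argument exactly: the paper derives Theorem \ref{wtd} as the immediate combination of Theorems \ref{welltotal} and \ref{welltotal2}, split on whether $\delta(F)\geq 1$ or $\delta(F)=0$, just as you do. No gaps.
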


\section{Open problems}
We conclude the paper by listing some interesting problems and directions for further research.

\begin{itemize}
\item[$\bullet$]  Find results on well $\mu$-dominated graphs, where $\mu$ is at least one of 
																	$\gamma_r, \gamma^{oc}, \gamma_{tr}, \gamma_t^{oc}, \gamma_a, \gamma_p, \gamma^k, k \geq 1$. 
                                 In particular, characterize the generalized lexicographic product graphs that are
                                  well $\mu$-dominated.
\end{itemize}

\begin{itemize}
\item[$\bullet$]   Characterize/describe those graphs $G$ having an efficient dominating set 
                                  of cardinality $\gamma_t(G)/2$  (see Theorem \ref{eff}). Such graphs are 
																	all circulants $C(4k+2; \{1,2,..,k\} \cup \{n-1,n-2,..,n-k\}$, $k \geq 1$.
  \end{itemize}

\begin{itemize}
\item[$\bullet$] 
                                Find results on dominating $(\mathcal{M}, \mathcal{C})$-sets (see Remark \ref{roc}).
  \end{itemize}

\begin{itemize}
\item[$\bullet$]                                
															Characterize/describe those generalized lexicographic product of graphs $G[\Phi]$ for which 
															at least one of the following holds: 
															$\gamma_{(\mathcal{A}, \mathcal{B})}(G[\Phi]) = i(G)\gamma_{(\mathcal{A}, \mathcal{B})}(F_1)$ 
														 and 
														 $\beta_0(G)\Gamma_{(\mathcal{A}, \mathcal{B})}(F_1) = \Gamma_{(\mathcal{A}, \mathcal{B})}(G[\Phi])$ 
														(see Corollary \ref{corchain}).
  \end{itemize}

\end{document}